\documentclass[12pt,a4paper]{amsart}

\usepackage{amsmath,amssymb,amsfonts}
\usepackage[utf8]{inputenc}
\usepackage[T1]{fontenc}
\usepackage{newtxtext,newtxmath,mathtools}
\usepackage[colorlinks=true,linkcolor=teal,citecolor=magenta,urlcolor=blue]{hyperref}
\usepackage[margin=20mm,top=28mm]{geometry}
\usepackage{enumerate}

\newtheorem{theorem}{Theorem}[section]
\newtheorem{corollary}[theorem]{Corollary}
\newtheorem{lemma}[theorem]{Lemma}
\newtheorem{proposition}[theorem]{Proposition}
\theoremstyle{definition}
\newtheorem{example}[theorem]{Example}
\newtheorem{definition}[theorem]{Definition}
\newtheorem{remark}[theorem]{Remark}

\newcommand{\K}{\Bbbk}
\newcommand{\Hilb}{\mathrm{Hilb}}
\newcommand{\FP}{\mathrm{FP}}
\newcommand{\Tor}{\operatorname{Tor}}
\newcommand{\Hom}{\operatorname{Hom}}
\newcommand{\GKdim}{\operatorname{GKdim}}
\newcommand{\Ext}{\operatorname{Ext}}
\newcommand{\ord}{\operatorname{ord}}
\newcommand{\length}{\operatorname{length}}
\newcommand{\pole}{\operatorname{pole}}
\newcommand{\Nil}{\operatorname{nil}}

\renewcommand{\leq}{\leqslant}
\renewcommand{\geq}{\geqslant}
\numberwithin{equation}{section}
\title{Signed Euler--Smith Profiles of Graded Algebras}

\author{Atabey Kaygun}
\address{Istanbul Technical University\\
Istanbul, Turkey}
\email{kaygun@itu.edu.tr}

\subjclass[2020]{Primary 16P90; Secondary 16W50, 15A21, 16D90}
\keywords{Euler--Smith profiles, matrix Hilbert series, Smith normal form, Gelfand--Kirillov dimension.}

\begin{document}

\begin{abstract}
Matrix Hilbert series retain local elementary-divisor data that their determinants
discard.  For locally finite positively graded elementary algebras whose vertex
simples are of type $\FP_\infty$ and whose matrix Hilbert series is rational, we
construct from possibly infinite minimal resolutions a rational Euler matrix and
its signed local Smith profile at $x=1$.  The profile refines pole-order growth
for perfect modules and is invariant under shift-compatible graded Morita and
perfect-derived equivalences.  Every finite integer profile occurs, while a
wholly negative profile forces exponential corner growth.  This extends the
nonnegative local Smith framework for twisted Calabi--Yau algebras to a setting
in which the Euler matrix may itself have poles.
\end{abstract}

\maketitle

\section{Introduction}

For a graded algebra with several primitive idempotents, the matrix Hilbert
series records directional growth that is lost both in the total Hilbert series
and in its determinant.  Reyes--Rogalski made this distinction effective in the
rational case: the nonzero poles of the matrix entries determine the
finite-growth versus exponential-growth branch, and in the finite branch the
largest pole order at $x=1$ is the Gelfand--Kirillov dimension
\cite[Lemma~2.7 and Proposition~2.8]{ReyesRogalski2019}.  Their
Example~2.9 also shows why the determinant is too coarse: cancellation can make
the multiplicity of $1$ in a determinantal denominator strictly larger than the
actual growth degree.

In the homologically smooth elementary case, the same work identifies the
denominator matrix with homological data.  Finite projective resolutions of the
vertex simples produce a polynomial matrix $q_A(x)$ with
$C_A(x)=q_A(x)^{-1}$ \cite[Proposition~4.2]{ReyesRogalski2019}; the later
structure theory of Reyes--Rogalski places this construction in the setting of
locally finite graded twisted Calabi--Yau and generalized Artin--Schelter
regular algebras \cite{ReyesRogalski2022}.  The present paper starts from this
homological denominator but removes the finite-resolution hypothesis.

The local Smith viewpoint itself was developed in
\cite[Section~2.1]{kaygun2026localsmithprofiles} for twisted Calabi--Yau
algebras.  There the homological denominator is polynomial and the local
exponents at $x=1$ are nonnegative; the additional Calabi--Yau identities then
impose Nakayama and parity constraints on that profile.  Here the purpose is
different.  We retain the same local elementary-divisor philosophy but allow
possibly infinite $\FP_\infty$ resolutions and a rational Euler matrix with
zeros or poles at $x=1$.  The resulting relative position is therefore signed.
The standard transport of local Smith data under changes of the local
coordinate, recalled in \cite[Section~1.1]{kaygun2026localsmithprofiles}, is
also the only formal input needed for the Koszul-duality discussion below.

Let
\[
 A=\bigoplus_{n\geq0}A_n,\qquad
 A_0=\bigoplus_{i=1}^r\K e_i,
\]
be locally finite and positively graded.  We assume that the vertex simples are
of graded type $\FP_\infty$ and that the matrix Hilbert series $C_A(x)$ is
rational.  Positive grading makes the alternating Euler sums of the minimal
resolutions coefficientwise finite.  Theorem~\ref{thm:completed-euler-cartan}
therefore constructs a completed Euler matrix, and
Proposition~\ref{prop:rational-algebraization} identifies its rational
algebraization with
\[
 \mathcal E_A(x)=C_A(x)^{-1}\in GL_r(\mathbb Q(x)).
\]
This is the first step: it extends the polynomial homological denominator of the
smooth case to the $\FP_\infty$ setting without changing its formal relation to
the matrix Hilbert series.

At the growth point $x=1$, the lattices
\[
 \mathbb Q\llbracket x-1\rrbracket^r,\qquad
 \mathcal E_A(x)\mathbb Q\llbracket x-1\rrbracket^r
\]
have a unique relative Smith position
$a_1\leq\cdots\leq a_r$.  We call this the signed Euler--Smith profile.
Its positive and negative elementary divisors are encoded by
$\mathcal G_A^+$ and $\mathcal G_A^-$.  The determinant retains only the signed
total length $\sum_i a_i$, whereas the pair of modules retains the complete
local elementary-divisor data.  This is precisely the matrix information that
is invisible to determinantal pole counting.

Two consequences give the profile its content.  First, realization is
unrestricted: Theorem~\ref{thm:realization} shows that every nondecreasing
integer tuple occurs.  The nontrivial realization problem is therefore not
existence itself, but connected realization modulo a common shift; explicit
tensor and square-zero constructions give large families in that direction.
Second, the signs interact sharply with growth.  For every perfect graded
module, Proposition~\ref{prop:local-growth-class} identifies its pole order at
$1$ with the torsion order of the corresponding class in $\mathcal G_A^+$.
Theorem~\ref{thm:perfect-global-local-dichotomy} separates this local degree from
the global choice of growth branch, and Corollary~\ref{cor:euler-smith-growth}
gives, in the finite branch,
\[
 \GKdim A=\max\{0,a_r\}.
\]
Conversely, Proposition~\ref{prop:negative-top} shows that $a_r<0$ forces
exponential growth in every diagonal corner.  Negative exponents are therefore
freely realizable but cannot occur wholly in the finite-growth regime.

The profile is also categorical once the grading translation is retained.
Theorem~\ref{thm:morita-invariance} proves invariance under shift-compatible
graded Morita equivalence.  Lemma~\ref{lem:euler-form} identifies $C_A$ as the
Gram matrix of the sesquilinear Euler form on graded $K$-theory, and
Theorem~\ref{thm:derived-invariance} upgrades the invariance to
shift-compatible equivalences of perfect derived categories.  The Zhang-twist
example shows that compatibility with the grading shift is essential.

The organization follows these dependencies.  Section~\ref{sec:euler-inversion}
constructs and algebraizes the Euler matrix.  Section~\ref{sec:local-smith}
introduces the signed local Smith data.  Section~\ref{sec:realization} proves
complete realization, develops connected constructions, and records the
Koszul-dual interpretation at the antipodal point.  Section~\ref{sec:growth}
then develops module and algebra growth and isolates the exponential consequence
of wholly negative profiles.  Section~\ref{sec:morita} proves categorical
invariance and its sharpness.

\subsection*{Use of large language models}

Large language models were used for copy editing and to help identify references
and exact computations requiring verification.  They were not treated as
mathematical authorities: all arguments, references, and computations were checked
independently, and the author is responsible for the mathematical content.

\section{Completed Euler inversion and rational algebraization}
\label{sec:euler-inversion}

Throughout, $\K$ is the arbitrary base field over which the algebras,
modules, and categorical equivalences are defined; no characteristic assumption
is imposed.  Graded dimensions are ordinary integers, so the matrix Hilbert
series and completed Euler matrix below have integral coefficients even when
$\operatorname{char}\K>0$.  Rationality below therefore refers to rational
functions over $\mathbb Q$, and the local Smith calculation at $x=1$ is performed
over $\mathbb Q\llbracket x-1\rrbracket$.

A graded module is \emph{perfect} if it has a finite resolution by finitely
generated graded projectives, and is of type $\FP_\infty$ if it has a
projective resolution by finitely generated graded projectives in every
homological degree.  We use $M\langle m\rangle_n=M_{n-m}$ for grading shifts.

Let
\[
  A=\bigoplus_{n\geq0}A_n,
  \qquad
  A_0=E=\bigoplus_{i=1}^r\K e_i,
  \qquad
  J=A_{>0},
\]
where $A$ is locally finite and positively graded, and $e_1,\ldots,e_r$ are
primitive orthogonal idempotents.  Write $S_j=Ae_j/Je_j$ for the vertex
simples.  For a locally finite graded $E$-bimodule $V$, set
\[
  \Hilb_V(x)_{ij}:=\sum_{m\geq0}\dim_\K(e_iV_me_j)x^m.
\]
For a graded left module $M$, let $\Hilb_M(x)$ be the column vector whose $i$th
entry is $\Hilb_x(e_iM)$.  Regarding $A$ as a graded $E$-bimodule, write
$C_A(x):=\Hilb_A(x)$ for its matrix Hilbert series, so that
$C_A(x)_{ij}=\Hilb_x(e_iAe_j)$.  In particular,
$C_A(x)\in M_r(\mathbb Z\llbracket x\rrbracket)$.

Assume throughout this section that every vertex simple $S_j=Ae_j/Je_j$ is of
type $\FP_\infty$.  We use the standard
minimal-resolution theory for bounded-below graded modules over a locally finite
graded algebra with finite-dimensional degree-zero part.  Existence of minimal
graded projective resolutions is recalled in
\cite[Section~2, paragraph preceding Lemma~2.2]{ReyesRogalski2022}, and the
graded Nakayama lemma used below is
\cite[Lemma~2.2]{ReyesRogalski2022}.

\subsection{Completed inversion and rationality}

\begin{lemma}[Graded projective decomposition]
\label{lem:graded-projective-decomposition}
Every finitely generated graded projective left $A$-module $P$ admits a finite
decomposition
\[
  P\cong\bigoplus_{i=1}^r Ae_i\otimes_\K W_i,
\]
where $W_i\cong e_i(P/JP)$ is finite-dimensional and graded.
\end{lemma}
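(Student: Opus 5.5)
The plan is the standard projective-cover argument via the graded Nakayama lemma \cite[Lemma~2.2]{ReyesRogalski2022}. Set $\bar P=P/JP$. Since $JP$ is a graded submodule and $J$ annihilates $\bar P$, the quotient is a graded module over $A/J\cong E$. It is finite-dimensional because $P$ is finitely generated and $E$ is finite-dimensional. Being an $E$-module, $\bar P$ splits as $\bigoplus_i e_i\bar P$, and each $W_i:=e_i\bar P$ is a finite-dimensional graded $\K$-space. Choose homogeneous bases of the $W_i$, and lift each basis vector $w\in W_i$ to a homogeneous element $\tilde w\in e_iP$ of the same degree. The lift can be placed in $e_iP$ by replacing an arbitrary lift $p$ with $e_ip$, which still maps to $e_iw=w$ because $e_i$ has degree zero.

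Next, define the graded map
\[
\varphi:\bigoplus_i Ae_i\otimes_\K W_i\to P,\qquad ae_i\otimes w\mapsto ae_i\tilde w.
\]
This is $A$-linear and degree-preserving when $W_i$ carries its grading. By construction, $\varphi$ induces an isomorphism modulo $J$, since $(Ae_i\otimes W_i)/J(Ae_i\otimes W_i)\cong S_i\otimes W_i$, which maps onto $e_i\bar P$ bijectively. Hence $P=\operatorname{im}\varphi+JP$. Because $P$ is finitely generated, hence bounded below, the graded Nakayama lemma gives that $\varphi$ is surjective.

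Since $P$ is projective, $\varphi$ splits: the domain is isomorphic to $P\oplus K$ with $K=\ker\varphi$ a graded direct summand, the splitting being taken graded by the usual averaging/degree-component argument for graded projectives. Then $K$ is a quotient of a finitely generated module, hence finitely generated and bounded below. Reducing modulo $J$ gives
\[
\bar P\oplus K/JK\cong \bigoplus_i S_i\otimes W_i,
\]
and comparing finite $\K$-dimensions, using that $\bar\varphi$ is already an isomorphism, yields $K/JK=0$. Nakayama again gives $K=0$, so $\varphi$ is an isomorphism. Finiteness of the decomposition is immediate because $r$ and the $\dim W_i$ are finite.

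The only delicate point is ensuring that the splitting and kernel stay in the graded category and that Nakayama's bounded-below hypothesis holds. Both follow from finite generation and positivity of the grading, so I do not expect a genuine obstacle.
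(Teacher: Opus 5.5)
Your proof is correct and follows the paper's argument: lift homogeneous bases of $e_i(P/JP)$ to $e_iP$, get surjectivity from graded Nakayama, split the map using projectivity, and kill the finitely generated kernel via $K=JK$ and Nakayama again. The one loose phrase is ``averaging''. The graded splitting comes either from taking the homogeneous degree component of an arbitrary section, or automatically if graded projective means projective in the graded module category.
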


\begin{proof}
For each $i$, choose homogeneous lifts in $e_iP$ of a basis of $e_i(P/JP)$.  They define a
graded map
\[
  \phi:\bigoplus_iAe_i\otimes_\K e_i(P/JP)\longrightarrow P
\]
which is an isomorphism modulo $J$.  Graded Nakayama
\cite[Lemma~2.2]{ReyesRogalski2022} gives surjectivity.  Since $P$ is
projective, $\phi$ splits; its finitely generated kernel satisfies $K=JK$, hence
vanishes by the same lemma.
\end{proof}

Choose a minimal graded resolution
\[
\cdots\longrightarrow P_2^{(j)}\longrightarrow P_1^{(j)}
  \longrightarrow P_0^{(j)}\longrightarrow S_j\longrightarrow0
\]
with
\[
P_p^{(j)}\cong\bigoplus_{i=1}^r Ae_i\otimes_\K W_{p,ij}.
\]
Minimality means that every differential has image in $JP_{p-1}^{(j)}$;
this is the form in which minimality enters the proof of
\cite[Lemma~2.6]{ReyesRogalski2022}, and it is equivalent to the
projective-cover definition recalled before
\cite[Lemma~2.2]{ReyesRogalski2022}.  Since $J$ is
positively graded and $S_j$ is concentrated in degree zero, induction on $p$
shows that every minimal generator of $P_p^{(j)}$ has degree at least $p$.
Thus $(W_{p,ij})_m=0$ for $m<p$, so only finitely many homological degrees
contribute to a fixed internal degree.  Applying $E\otimes_A-$ kills the
differentials and gives
\[
e_i\Tor_p^A(E,S_j)\cong W_{p,ij}.
\]

\begin{definition}[Completed Euler matrix]
Define
\[
\widehat{\mathcal E}_A(x)_{ij}
  :=\sum_{p\geq0}(-1)^p\Hilb_x(W_{p,ij})
  \in\mathbb Z\llbracket x\rrbracket.
\]
\end{definition}

\begin{theorem}[Completed Euler--Cartan inversion]
\label{thm:completed-euler-cartan}
One has $C_A(x)\widehat{\mathcal E}_A(x)=I_r$ in $M_r(\mathbb Z\llbracket x\rrbracket)$.
\end{theorem}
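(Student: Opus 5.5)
The plan is to take the Euler characteristic of the minimal resolution of each vertex simple $S_j$ one internal degree at a time. Then I will express the Hilbert series of each term through $C_A$ using Lemma~\ref{lem:graded-projective-decomposition}.

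\emph{Hilbert series of the projectives.} First, for a finite-dimensional graded vector space $W$, one has $e_k(Ae_i\otimes_\K W)_m=\bigoplus_{a+b=m}e_kA_ae_i\otimes W_b$. Hence the column vector $\Hilb_{Ae_i\otimes W}(x)$ has $k$th entry $C_A(x)_{ki}\Hilb_x(W)$. Summing over $i$ gives
\[
\Hilb_{P_p^{(j)}}(x)_k=\sum_{i=1}^r C_A(x)_{ki}\,\Hilb_x(W_{p,ij}),
\]
that is, $\Hilb_{P_p^{(j)}}(x)$ is $C_A(x)$ times the $j$th column of the matrix $(\Hilb_x(W_{p,ij}))_{ij}$.

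\emph{Degreewise exactness.} Next, fix an internal degree $m$ and a vertex $k$. Every generator of $P_p^{(j)}$ has degree at least $p$ and $A$ is positively graded, so $(P_p^{(j)})_m=0$ for $p>m$. Applying the exact functor $e_k(-)_m$ to the resolution therefore gives a finite exact sequence of finite-dimensional vector spaces:
\[
0\to e_k(P_m^{(j)})_m\to\cdots\to e_k(P_0^{(j)})_m\to e_k(S_j)_m\to0 .
\]
Local finiteness of $A$ and finite-dimensionality of the $W_{p,ij}$ ensure each term is finite-dimensional. The alternating sum of dimensions vanishes, so
\[
\sum_{p\ge0}(-1)^p\dim e_k(P_p^{(j)})_m=\dim e_k(S_j)_m=\delta_{kj}\delta_{m0}.
\]

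\emph{Assembling.} Finally, I multiply by $x^m$ and sum over $m$. The sum over $p$ is coefficientwise finite: the coefficient of $x^m$ in $\Hilb_x(W_{p,ij})$ vanishes for $p>m$, and $C_A$ has no negative powers. So I may interchange the sum over $p$ with the matrix product in $\mathbb Z\llbracket x\rrbracket$. This yields
\[
\sum_i C_A(x)_{ki}\,\widehat{\mathcal E}_A(x)_{ij}=\delta_{kj},
\]
which is the claimed identity.

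The only genuinely delicate point is the interchange of the infinite alternating sum with the product. It is legitimate because the $x$-adic topology on $\mathbb Z\llbracket x\rrbracket$ makes $\sum_p(-1)^p\Hilb_x(W_{p,ij})$ converge, which follows from the degree bound $(W_{p,ij})_m=0$ for $m<p$ established above. Multiplication by the fixed series $C_A(x)_{ki}$ is continuous in this topology. I expect no further obstacle.
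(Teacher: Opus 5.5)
Your proposal is correct and is essentially the paper's argument: the paper also fixes an internal degree, notes that the corresponding strand of the minimal resolution is finite, and takes its Euler characteristic to get $\Hilb_{S_j}(x)=C_A(x)\widehat{\mathcal E}_A(x)e_j$ coefficientwise, then assembles the columns. You additionally spell out two steps the paper leaves implicit: the identity $\Hilb_x(e_k(Ae_i\otimes_\K W))=C_A(x)_{ki}\Hilb_x(W)$, and why interchanging the alternating sum with the matrix product is legitimate.
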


\begin{proof}
Fix an internal degree.  The corresponding strand of a minimal resolution has
only finitely many nonzero terms, so its ordinary Euler characteristic gives
$\Hilb_{S_j}(x)=C_A(x)\widehat{\mathcal E}_A(x)e_j$ coefficientwise.  Since
$\Hilb_{S_j}=e_j$, assembling the columns proves the identity.
\end{proof}

\begin{definition}[Rational algebraization]
The completed Euler matrix is \emph{rationally algebraizable} if it is the
germ at $x=0$ of a matrix
$\mathcal E_A(x)\in GL_r(\mathbb Q(x))$.  We say that $C_A(x)$ is
\emph{rational} when its entries lie in $\mathbb Q(x)$.
\end{definition}

The algebraization, when it exists, is unique because each rational-function
entry is determined by its formal germ at a point where it is regular.

\begin{proposition}[Rationality criterion]
\label{prop:rational-algebraization}
Under graded $\FP_\infty$, the completed Euler matrix is rationally
algebraizable if and only if the matrix Hilbert series $C_A(x)$ is rational.
\end{proposition}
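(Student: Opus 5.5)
The plan is to transport rationality across the identity $C_A(x)\widehat{\mathcal E}_A(x)=I_r$ of Theorem~\ref{thm:completed-euler-cartan}, using that inversion of matrices commutes with the embedding $\mathbb Q(x)_{(x)}\hookrightarrow\mathbb Q\llbracket x\rrbracket$. Let $R=\mathbb Q(x)\cap\mathbb Q\llbracket x\rrbracket$ be the rational functions regular at $x=0$, regarded as germs. Then $R$ is the local ring of $\mathbb Q[x]$ at $(x)$, and its units are exactly the germs with nonzero constant term. A matrix over $R$ is invertible over $R$ if and only if its determinant is a unit of $R$. The same holds over $\mathbb Q\llbracket x\rrbracket$, where invertibility means $\det(0)\neq0$.

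For the direction ($\Leftarrow$), suppose $C_A(x)$ is rational. Its entries are formal power series with integer coefficients that are also rational functions, so they lie in $R$. Since $A_0=E$, the constant term is $C_A(0)_{ij}=\dim_\K e_iEe_j=\delta_{ij}$, so $C_A(0)=I_r$. Hence $\det C_A$ is a unit of $R$, and $C_A(x)$ has an inverse $\mathcal E_A(x)\in GL_r(R)\subseteq GL_r(\mathbb Q(x))$, given by the adjugate formula. By Theorem~\ref{thm:completed-euler-cartan} together with uniqueness of two-sided inverses in $M_r(\mathbb Q\llbracket x\rrbracket)$, the germ of $\mathcal E_A(x)$ at $0$ equals $\widehat{\mathcal E}_A(x)$. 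So the completed Euler matrix is rationally algebraizable, and its algebraization is $C_A(x)^{-1}$.

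For the direction ($\Rightarrow$), suppose $\widehat{\mathcal E}_A(x)$ is the germ of some $\mathcal E_A(x)\in GL_r(\mathbb Q(x))$. Being a germ at $0$ means each entry is regular at $0$, so $\mathcal E_A(x)\in M_r(R)$. Take determinants in Theorem~\ref{thm:completed-euler-cartan}: $\det C_A\cdot\det\widehat{\mathcal E}_A=1$ in $\mathbb Q\llbracket x\rrbracket$, so $\det\widehat{\mathcal E}_A(0)\neq0$. Therefore $\det\mathcal E_A$ is a unit of $R$, and $\mathcal E_A(x)^{-1}\in M_r(R)\subseteq M_r(\mathbb Q(x))$. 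Its germ is the inverse of $\widehat{\mathcal E}_A(x)$ in $M_r(\mathbb Q\llbracket x\rrbracket)$, which is $C_A(x)$ by Theorem~\ref{thm:completed-euler-cartan}. Hence every entry of $C_A(x)$ is the expansion of a rational function, that is, $C_A(x)$ is rational.

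The main obstacle is the bookkeeping of what ``rational'' means for a formal series. I will fix the convention that a series is rational when it is the Taylor expansion at $0$ of an element of $R$, and check that the inclusion $R\hookrightarrow\mathbb Q\llbracket x\rrbracket$ is an injective ring map. Injectivity is the uniqueness remark made before the statement, and it guarantees that inverses computed in $R$ map to inverses in the power-series ring. The $\FP_\infty$ hypothesis enters only through Theorem~\ref{thm:completed-euler-cartan}, which is what makes $\widehat{\mathcal E}_A$ well defined.
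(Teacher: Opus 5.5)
Your proposal is correct and follows the paper's argument. In both, the identity $C_A\widehat{\mathcal E}_A=I_r$ of Theorem~\ref{thm:completed-euler-cartan} is inverted over $\mathbb Q(x)$, using $C_A(0)=I_r$ (respectively, regularity and invertibility at $0$) and uniqueness of formal inverses to match germs; you simply make explicit, via the local ring $\mathbb Q[x]_{(x)}$ and the determinant criterion, the bookkeeping the paper's three-line proof leaves implicit.
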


\begin{proof}
Theorem~\ref{thm:completed-euler-cartan} identifies the two formal matrices as
mutual inverses.  If $C_A\in M_r(\mathbb Q(x))$, then $C_A(0)=I_r$ implies that
its inverse over $\mathbb Q(x)$ has the required formal germ.  Conversely, if
$\mathcal E_A\in GL_r(\mathbb Q(x))$, then its inverse is a rational matrix
whose formal germ is $C_A$.
\end{proof}

Whenever these conditions hold we write
$C_A(x)=\mathcal E_A(x)^{-1}$ as an identity in $M_r(\mathbb Q(x))$.

\begin{remark}[Scope of the standing hypotheses]
\label{rem:scope-rationality}
If $A$ is left noetherian, then the vertex simples are automatically of graded
type $\FP_\infty$; rationality of $C_A$ remains a separate hypothesis.  A
standard source of rationality is the Hilbert--Serre theorem: if $A$ is finite
as a graded module over a finitely generated connected commutative positively
graded central subalgebra, then every corner $e_iAe_j$ has rational Hilbert
series \cite[p.~401, first paragraph]{AvramovBuchweitzSally1997}.  Noetherianity
is not assumed in the theory below.
\end{remark}

\subsection{Scope beyond finite projective dimension}

\begin{example}[A nonsmooth $\FP_\infty$ family]
\label{ex:nonsmooth-fpinfty}
Let
\[
  D_d=\K[y_1,\ldots,y_d,z]/(z^2),
  \qquad \deg y_i=\deg z=1,
  \qquad d\geq1.
\]
Over $D=\K[z]/(z^2)$ the periodic complex
\[
  \cdots\xrightarrow{z}D\langle3\rangle\xrightarrow{z}D\langle2\rangle
  \xrightarrow{z}D\langle1\rangle\xrightarrow{z}D
  \longrightarrow\K\longrightarrow0
\]
is a minimal exact resolution.  Tensoring it with the finite Koszul resolution
over $\K[y_1,\ldots,y_d]$ shows that the augmentation module is of graded type
$\FP_\infty$ over $D_d$ but has infinite projective dimension.  Thus $D_d$ is
not homologically smooth; since $(D_d)_0=\K$ is separable, compare
\cite[Theorem~3.10]{ReyesRogalski2022}.

Nevertheless
\[
  C_{D_d}(x)=\frac{1+x}{(1-x)^d},
  \qquad
  \mathcal E_{D_d}(x)=\frac{(1-x)^d}{1+x}.
\]
Hence the completed Euler matrix algebraizes rationally; moreover it has a zero
of order $d$ at $x=1$.  The point here is the scope of the homological
construction: completed Euler inversion remains available although the vertex
simple has infinite projective dimension.
\end{example}

Since $\mathcal E_A(0)=I_r$, the local behavior at the origin is trivial.  The
core theory therefore begins at $x=1$, the point distinguished by growth.

\section{Signed local Smith data}
\label{sec:local-smith}

The completed inversion theorem supplies the rational matrix to which local
Smith theory can now be applied.  From now on the \emph{standing hypotheses}
are that every vertex simple is
of graded type $\FP_\infty$ and that $C_A(x)\in M_r(\mathbb Q(x))$, equivalently
that the completed Euler matrix is rationally algebraizable by
Proposition~\ref{prop:rational-algebraization}.  Unless explicitly stated
otherwise, every algebra considered from this point onward is understood to
satisfy these hypotheses.  When a construction below introduces a new algebra,
part of the assertion is that the construction remains within this class; we
verify this in the proof without repeating the standing hypotheses in each
statement.

Set $t=x-1$.  For a finite-length $\mathbb Q\llbracket t\rrbracket$-module $T$, set
$\Nil_t(T)=\min\{d\geq0:t^dT=0\}$, with $\Nil_t(0)=0$, and use
$\ord_t(0)=+\infty$.  Define the Euler lattice
\[
\Lambda_A:=\mathcal E_A(1+t)\,\mathbb Q\llbracket t\rrbracket^r
\subset \mathbb Q((t))^r.
\]

There exist $U,V\in GL_r(\mathbb Q\llbracket t\rrbracket)$ and unique integers $a_1\leq\cdots\leq a_r$ such that
\begin{equation}
\label{eq:local-smith-form}
  U\,\mathcal E_A(1+t)\,V=\operatorname{diag}(t^{a_1},\ldots,t^{a_r}).
\end{equation}
Indeed, multiply $\mathcal E_A(1+t)$ by a large power of $t$, apply ordinary
Smith normal form over the DVR $\mathbb Q\llbracket t\rrbracket$, and subtract
that power from the invariant exponents.  For the
corresponding local construction for matrix polynomials, compare
\cite[Theorems~1 and~3]{WilkeningYu2011}.  We call
$(a_1,\ldots,a_r)$ the \emph{signed Euler--Smith profile at $1$}.

Define
\[
\mathcal G_A^+:=\mathbb Q\llbracket t\rrbracket^r/
  (\mathbb Q\llbracket t\rrbracket^r\cap\Lambda_A),
  \qquad
  \mathcal G_A^-:=\Lambda_A/
  (\mathbb Q\llbracket t\rrbracket^r\cap\Lambda_A).
\]
These finite-length $\mathbb Q\llbracket t\rrbracket$-modules are the positive and
negative Euler--Smith modules at $1$.

The Smith form immediately identifies the positive and negative quotients:
\[
  \mathcal G_A^+\cong\bigoplus_{a_i>0}\mathbb Q\llbracket t\rrbracket/(t^{a_i}),
  \qquad
  \mathcal G_A^-\cong\bigoplus_{a_i<0}\mathbb Q\llbracket t\rrbracket/(t^{-a_i}).
\]
Thus the two modules are simply the positive and negative elementary divisors
of the relative lattice position.

For $1\leq s\leq r$, let $\delta_s(\mathcal E_A)$ be the least $t$-adic
valuation of a nonzero $s\times s$ minor of $\mathcal E_A(1+t)$, with $\delta_0=0$.  The
Smith form gives
\begin{equation}
  \delta_s(\mathcal E_A)=a_1+\cdots+a_s,
\end{equation}
and in particular
\begin{equation}
  \ord_{x=1}\det\mathcal E_A(x)
  =\sum_i a_i
  =\length_{\mathbb Q\llbracket t\rrbracket}\mathcal G_A^+
   -\length_{\mathbb Q\llbracket t\rrbracket}\mathcal G_A^-.
\end{equation}
Thus the determinant records only the signed total length, whereas the pair
$(\mathcal G_A^+,\mathcal G_A^-)$ retains the complete signed Smith profile.
For polynomial Euler matrices the corresponding nonnegative construction is
studied in \cite[Section~2.1]{kaygun2026localsmithprofiles}; here
$\mathcal E_A(1+t)$ may lie anywhere in $GL_r(\mathbb Q((t)))$, so the relative
position is genuinely signed.  Both the determinantal order formula and the Smith
form itself apply verbatim to any matrix in $GL_r(\mathbb Q((t)))$; in
particular the exponents of $C_A(1+t)=\mathcal E_A(1+t)^{-1}$ are
$(-a_r\leq\cdots\leq-a_1)$.  The same local Smith construction can be made at any closed point by passing
to its completed local DVR.  We work at $x=1$ because it is the point relevant
to growth; Subsection~\ref{sec:koszul-duality} uses the same construction at
$x=-1$.

The local data is gauge invariant in the following sense, which we use
repeatedly.

\begin{lemma}[Gauge invariance]
\label{lem:gauge}
Let $H,H'\in GL_r(\mathbb Q((t)))$ with $H'=UHV$ for some
$U,V\in GL_r(\mathbb Q\llbracket t\rrbracket)$.  Then $H$ and $H'$ have the
same exponent multisets, and multiplication by $U$ carries the lattice pair
$(\mathbb Q\llbracket t\rrbracket^r,H\mathbb Q\llbracket t\rrbracket^r)$ to
$(\mathbb Q\llbracket t\rrbracket^r,H'\mathbb Q\llbracket t\rrbracket^r)$,
inducing
isomorphisms of the associated modules $\mathcal G^{\pm}$.
\end{lemma}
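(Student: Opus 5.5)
The argument is direct and uses only that $U$ and $V$ are invertible over $R:=\mathbb Q\llbracket t\rrbracket$, so that $UR^r=R^r=VR^r$. It splits into the exponent statement and the lattice statement.

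\emph{Exponent multisets.} Take Smith forms $U_0HV_0=\operatorname{diag}(t^{a_i})$ and $U_0'H'V_0'=\operatorname{diag}(t^{a_i'})$ as in \eqref{eq:local-smith-form}. Substituting $H'=UHV$ gives
\[
(U_0'U)\,H\,(VV_0')=\operatorname{diag}(t^{a_i'}),
\]
where $U_0'U$ and $VV_0'$ lie in $GL_r(R)$. By uniqueness of the signed exponents, $(a_i')=(a_i)$. That uniqueness was asserted after \eqref{eq:local-smith-form}. Alternatively, one can argue via the minor valuations: left or right multiplication by an element of $GL_r(R)$ preserves the $R$-module generated by the $s\times s$ minors, by Cauchy--Binet applied in both directions. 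Hence $\delta_s(H)=\delta_s(H')$ for all $s$, and the relation $\delta_s=a_1+\cdots+a_s$ recovers the exponents.

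\emph{Lattices.} Compute
\[
UH R^r=U(UHV)V^{-1}\cdots
\]
more cleanly as $H'R^r=UHVR^r=UHR^r$. Together with $UR^r=R^r$, this shows that the $R$-linear automorphism $u\colon v\mapsto Uv$ of $\mathbb Q((t))^r$ maps $R^r$ onto $R^r$ and maps $HR^r$ onto $H'R^r$. Since $u$ is a bijection, it also carries the intersection $R^r\cap HR^r$ onto $R^r\cap H'R^r$. It follows that $u$ induces $R$-module isomorphisms
\[
R^r/(R^r\cap HR^r)\cong R^r/(R^r\cap H'R^r)
\quad\text{and}\quad
HR^r/(R^r\cap HR^r)\cong H'R^r/(R^r\cap H'R^r).
\]
These are exactly the isomorphisms $\mathcal G^{+}$ and $\mathcal G^{-}$ defined via $\mathcal G_A^\pm$ with $H$ in place of $\mathcal E_A(1+t)$.

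\emph{Main subtlety.} The only point needing care is that the uniqueness of signed exponents was only sketched in the text, by scaling with $t^N$. I would make it explicit as follows. Choose $N$ such that $t^NH$ and $t^NH'$ both lie in $M_r(R)$. Then $t^NH'=U(t^NH)V$, and uniqueness of ordinary Smith normal form over the DVR $R$ gives equal invariant exponents. Subtracting $N$ from each exponent yields $(a_i')=(a_i)$.
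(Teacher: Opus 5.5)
Your proof is correct and follows the paper's argument. For the exponents, you absorb $U,V$ into the Smith transformations. For the lattice pair (with $R=\mathbb Q\llbracket t\rrbracket$ as in your notation), you use $H'R^r=UHVR^r=U(HR^r)$ together with $UR^r=R^r$. Your explicit treatment of the intersection, and of uniqueness via scaling by $t^N$, only spells out what the paper leaves implicit.

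Delete the garbled false start ``$UHR^r=U(UHV)V^{-1}\cdots$'', which is wrong as written, before the clean computation.
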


\begin{proof}
Smith forms of $H$ and $H'$ differ by units, and
$H'\mathbb Q\llbracket t\rrbracket^r
=UHV\mathbb Q\llbracket t\rrbracket^r
=U(H\mathbb Q\llbracket t\rrbracket^r)$, while
$U\mathbb Q\llbracket t\rrbracket^r=\mathbb Q\llbracket t\rrbracket^r$.
\end{proof}

The nonnegative case is exactly the case $\mathcal G_A^-=0$, equivalently
$\mathcal E_A$ is regular at $x=1$.  We do not impose this restriction: the
negative part is essential to the realization and growth results below.

\section{Realization and duality}
\label{sec:realization}

Once the signed profile has been defined, its unrestricted realization can be
settled before any growth theory is used.  Three elementary operations suffice.
For products,
\[
 C_{A\times B}=C_A\oplus C_B,\qquad
 \mathcal E_{A\times B}=\mathcal E_A\oplus\mathcal E_B,
\]
so profiles concatenate and
$\mathcal G^\pm_{A\times B}\cong\mathcal G^\pm_A\oplus\mathcal G^\pm_B$.
The standing hypotheses are inherited factorwise.

If $R_0=\K$ has one vertex and profile $(c)$, then
\[
 C_{R\otimes B}(x)=C_R(x)C_B(x),\qquad
 \mathcal E_{R\otimes B}(x)=\mathcal E_R(x)\mathcal E_B(x),
\]
and the profile of $R\otimes B$ is
$(b_1+c,\ldots,b_r+c)$ when the profile of $B$ is
$(b_1,\ldots,b_r)$.  Indeed,
$\mathcal E_R(1+t)=t^c u(t)$ for a unit
$u(t)\in\mathbb Q\llbracket t\rrbracket^\times$, and tensor products of
graded $\FP_\infty$ resolutions preserve the standing homological
hypothesis.  Polynomial stabilization is the special case
$R=\K[y_1,\ldots,y_d]$:
\[
 C_{B\otimes\K[y_1,\ldots,y_d]}=(1-x)^{-d}C_B,\qquad
 \mathcal E_{B\otimes\K[y_1,\ldots,y_d]}=(1-x)^d\mathcal E_B,
\]
so every exponent increases by $d$.

\subsection{A negative seed and complete realization}

\begin{proposition}[A negative rank-one seed]
\label{prop:negative-seed}
Let
\[
 R=\K\langle u,v,w\rangle/(wu,wv,w^2,uw,vw),
 \qquad \deg u=\deg v=\deg w=1.
\]
Then $R$ satisfies the standing hypotheses and has signed profile $(-1)$.
More precisely,
\[
 C_R(x)=\frac1{1-2x}+x
       =\frac{(1-x)(1+2x)}{1-2x},
 \qquad
 \mathcal E_R(x)=\frac{1-2x}{(1-x)(1+2x)},
\]
so
$\mathcal G_R^-\cong\mathbb Q\llbracket t\rrbracket/(t)$ and
$\mathcal G_R^+=0$.
\end{proposition}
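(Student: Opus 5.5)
Here $r=1$, since $R_0=\K$, and the plan has three parts: compute the Hilbert series, check the standing hypotheses, then read off the local Smith data.

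\textbf{Hilbert series.} The relations kill every word of length at least $2$ that contains $w$. Since the relations are monomial, $R$ has a $\K$-basis of monomials avoiding the forbidden words. These are
\begin{itemize}
\item the words in $u,v$, which give the free algebra series $1/(1-2x)$;
\item the single extra word $w$ in degree $1$.
\end{itemize}
Hence $C_R(x)=1/(1-2x)+x$. Over a common denominator,
\[
1+x-2x^2=(1-x)(1+2x),
\]
which gives the stated closed form. In particular $C_R$ is rational.

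\textbf{The $\FP_\infty$ hypothesis.} Write $R=F\times_\K D$ as a fibre product over $\K$, where $F=\K\langle u,v\rangle$ and $D=\K[w]/(w^2)$. I would show directly that the trivial module $\K$ has a minimal graded resolution with finitely generated terms; this is the step I expect to be the main obstacle.

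In low degrees the resolution is as follows.
\begin{itemize}
\item The augmentation ideal is $J=Ru+Rv+Rw$, so $P_0=R$.
\item The generators give $P_1=R\langle1\rangle^3\to J$, sending the three basis elements to $u$, $v$, $w$.
\item The kernel is computed from the monomial structure. The left annihilator of $w$ in $R$ is $J$, since $wu$, $wv$, $w^2$ and $uw$, $vw$ all vanish. The left annihilators of $u$ and $v$ are $Rw=\K w$.
\item Because $F$ is free, $Ru\oplus Rv$ restricted to $F$ is free, so the kernel is $Jw\oplus \K w\oplus \K w$, placed in the $w$-, $u$- and $v$-coordinates respectively.
\end{itemize}

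The kernel is therefore generated in degree $2$ by five elements, one for each relation. The process then continues with annihilators of $w$ recurring, and each syzygy module is again a finite sum of copies of $J$ and of $\K$ (shifted). By induction every term is finitely generated, which proves $\FP_\infty$.

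A cleaner alternative is to use the standard monomial-algebra (Anick) resolution. Its chains are built from the obstructions $wu$, $wv$, $w^2$, $uw$, $vw$. These chains are the paths $w^{n}$, $uw^n$, $vw^n$, $w^nu$, $w^nv$, $uw^nu$, and so on, and there are only finitely many in each length. That again gives finitely generated terms in each homological degree.

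As a consistency check on the grading, the Euler characteristic must equal $1/C_R(x)$, which matches Theorem~\ref{thm:completed-euler-cartan}.

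\textbf{Euler matrix and local Smith data.} By Proposition~\ref{prop:rational-algebraization},
\[
\mathcal E_R=C_R^{-1}=\frac{1-2x}{(1-x)(1+2x)}.
\]
Substituting $x=1+t$ gives $1-2x=-1-2t$ and $1+2x=3+2t$, so
\[
\mathcal E_R(1+t)=t^{-1}\cdot\frac{1+2t}{3+2t}.
\]
The factor $(1+2t)/(3+2t)$ is a unit in $\mathbb Q\llbracket t\rrbracket$. Therefore the profile is $(-1)$. From the Smith-form description this gives $\mathcal G_R^-\cong\mathbb Q\llbracket t\rrbracket/(t)$ and $\mathcal G_R^+=0$.
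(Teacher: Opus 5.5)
Your proof is correct and follows essentially the same route as the paper. The monomial basis gives $C_R$, and the substitution $x=1+t$ exhibits $\mathcal E_R(1+t)$ as $t^{-1}$ times a unit. The $\FP_\infty$ property comes from the annihilator computation showing that syzygies stay within finite sums of shifts of two fixed modules: you use $\K$ and $J$, with $\Omega(\K)=J$ and $\Omega(J)\cong J\langle1\rangle\oplus\K\langle2\rangle^{2}$, while the paper uses $\K$ and $Q=R/\K w$, which is equivalent since $J\cong Q\langle1\rangle^{2}\oplus\K\langle1\rangle$.

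Two fixes to the write-up. Write the $w$-coordinate of the kernel as $J$ rather than $Jw$, which is zero in $R$. Also state the closure $\Omega(J)\cong J\langle1\rangle\oplus\K\langle2\rangle^{2}$ explicitly instead of ``the process then continues''.
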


\begin{proof}
As a graded vector space, $R$ is the direct sum of the free algebra
$\K\langle u,v\rangle$ and the one-dimensional space $\K w$ in degree
one.  This gives the displayed Hilbert series.  At $x=1+t$,
\[
 \mathcal E_R(1+t)
 =t^{-1}\frac{1+2t}{3+2t},
\]
so its unique local Smith exponent is $-1$.

It remains to check the homological hypothesis.  Put $L=Rw=\K w$ and
$Q=R/L$.  If $J=R_{>0}$, then multiplication by $u$, $v$, and $w$ gives
\[
 J=Ru\oplus Rv\oplus Rw
   \cong Q\langle1\rangle^{\oplus2}\oplus\K\langle1\rangle,
\]
while the quotient map $R\to Q$ has kernel
$L\cong\K\langle1\rangle$.  Hence
\[
 \Omega_R(\K)\cong Q\langle1\rangle^{\oplus2}\oplus\K\langle1\rangle,
 \qquad
 \Omega_R(Q)\cong\K\langle1\rangle.
\]
Induction therefore expresses every syzygy of $\K$ as a finite direct
sum of shifts of $\K$ and $Q$; taking projective covers gives a graded
$\FP_\infty$ resolution.  Thus the unique vertex simple is of type
$\FP_\infty$, and rationality was already established by the displayed
formula for $C_R$.
\end{proof}

\begin{theorem}[Complete realization]
\label{thm:realization}
Every nondecreasing tuple of integers
\[
 a_1\leq\cdots\leq a_r
\]
occurs as the signed Euler--Smith profile of a locally finite positively
graded elementary algebra satisfying the standing hypotheses.
\end{theorem}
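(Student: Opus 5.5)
The plan is to reduce to rank one by using the product rule $\mathcal E_{A\times B}=\mathcal E_A\oplus\mathcal E_B$, under which profiles concatenate and the standing hypotheses are inherited factorwise. After reordering, the profile of $A^{(1)}\times\cdots\times A^{(r)}$ is the sorted concatenation of the individual profiles. It therefore suffices to produce, for every integer $c$, a connected algebra $R_c$ with $(R_c)_0=\K$ that satisfies the standing hypotheses and has profile $(c)$. The product $R_{a_1}\times\cdots\times R_{a_r}$ is then locally finite, positively graded and elementary, with $r$ primitive idempotents, and its profile is exactly $(a_1,\ldots,a_r)$, since the tuple is already nondecreasing.

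For the rank-one seeds I will split into three cases.

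\emph{Case $c\geq0$.} Take $R_c=\K[y_1,\ldots,y_c]$, with $R_0=\K$. Here $\mathcal E=(1-x)^c$, which equals $(-t)^c$, a unit times $t^c$. The augmentation module has a finite Koszul resolution, so the standing hypotheses hold.

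\emph{Case $c=-1$.} This is Proposition~\ref{prop:negative-seed}.

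\emph{Case $c\leq-2$.} Use the tensor rule recalled before that proposition: if $R_0=\K$ has profile $(c)$ and $B$ has profile $(b)$, then $R\otimes B$ has profile $(b+c)$. Iterating this gives $R^{\otimes m}$ with profile $(-m)$. Concretely, $C_{R^{\otimes m}}=C_R^m$, so $\mathcal E_{R^{\otimes m}}(1+t)=t^{-m}\bigl(\tfrac{1+2t}{3+2t}\bigr)^m$. The tensor power is still connected, locally finite and positively graded, and its Hilbert series is rational.

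The step I expect to need the most care is the claim, used in the tensor rule, that the tensor product of two connected algebras satisfying the standing hypotheses again satisfies them. The vertex simple of $R\otimes B$ is $\K\otimes\K$. Its resolution is the tensor product of the two minimal $\FP_\infty$ resolutions, and exactness follows from Künneth over the field $\K$. In each homological degree $p$, the term $\bigoplus_{i+j=p}P_i\otimes Q_j$ is a finite sum of finitely generated free modules, so the result is an $\FP_\infty$ resolution. This is not required to be minimal, although it is minimal because both differentials land in the augmentation ideals. Rationality follows because $C_{R\otimes B}=C_RC_B$, which holds since graded dimensions multiply.

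The paper states the tensor rule explicitly, so I will simply invoke it. I will also confirm that the rule is applied only with a one-vertex left factor, which is the situation here, since every seed has one vertex.

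Finally, I will note that Lemma~\ref{lem:gauge} handles the unit factors. It shows that $\operatorname{diag}(u_1t^{a_1},\ldots,u_rt^{a_r})$ with units $u_i\in\mathbb Q\llbracket t\rrbracket^\times$ has Smith exponents exactly $(a_1,\ldots,a_r)$. This completes the realization.
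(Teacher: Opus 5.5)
Your proposal is correct and follows the paper's proof essentially verbatim: polynomial algebras for $c\geq0$, tensor powers $R^{\otimes(-c)}$ of the negative seed for $c<0$, and the product formula to concatenate the one-vertex profiles into $(a_1,\ldots,a_r)$. Your explicit Künneth check that tensor products preserve the graded $\FP_\infty$ hypothesis, and your separate treatment of $c=-1$, only spell out what the paper folds into its tensor-shift observation.
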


\begin{proof}
For $a\geq0$, the polynomial algebra
$P_a=\K[z_1,\ldots,z_a]$, with $P_0=\K$, has
$\mathcal E_{P_a}(x)=(1-x)^a$ and profile $(a)$.  For $a<0$,
Proposition~\ref{prop:negative-seed} and the tensor-shift observation above show that
$R^{\otimes(-a)}$ has profile $(a)$.  Choose in this way a one-vertex
algebra $R_{a_i}$ of profile $(a_i)$ for each $i$.  Then
\[
  \prod_{i=1}^r R_{a_i}
\]
has profile $(a_1,\ldots,a_r)$ by the product formula above.
\end{proof}

Thus there is no sign obstruction to the profile itself.  The residual
realization problem with structure is to prescribe a profile by a connected,
hence ring-indecomposable, algebra, naturally modulo the common shifts above.

\subsection{Connected realizations}

For connected realization it is useful to keep two elementary algebra
constructions explicit.  If $N(x)\in M_r(\mathbb Z_{\geq0}[x])$ has
$N(0)=0$ and $V$ is a graded $E$-bimodule with
$\Hilb_V(x)=N(x)$, then the tensor algebra $T_E(V)$ has
\[
 \mathcal E_{T_E(V)}(x)=I_r-N(x).
\]
Indeed, multiplication identifies $T_E(V)\otimes_E V$ with the augmentation
ideal, so
\[
 0\longrightarrow T_E(V)\otimes_E V\longrightarrow T_E(V)
 \longrightarrow E\longrightarrow0
\]
gives finite projective resolutions of the vertex simples.

Dually, for the square-zero algebra $A_N=E\oplus V$ with $V^2=0$,
\[
 C_{A_N}(x)=I_r+N(x),\qquad
 \mathcal E_{A_N}(x)=(I_r+N(x))^{-1}.
\]
Since $J=V$ and $JV=0$, the first syzygy of each vertex simple is a finite
sum of shifted vertex simples, so the standing $\FP_\infty$ hypothesis is
preserved.  If
$0=b_1\leq\cdots\leq b_r$ are the local Smith exponents of
$I_r+N(1+t)$, inversion gives the signed profile
$(-b_r,\ldots,-b_1)$.  These two constructions prescribe, respectively, the
Euler matrix and its inverse.

The square-zero construction already gives useful connected normalized
families.  For every $m\geq1$ there is a connected two-vertex example with
profile $(-m,0)$.  Put $f=(1-x^2)^m$ and write
$f=1+n_+(x)-n_-(x)$ by separating the positive and negative coefficients
of $f-1$.  Since the coefficient of $x$ in $f$ vanishes,
$n_-(x)/x$ is a polynomial with nonnegative coefficients and zero constant
term.  Set
\[
  N(x)=\begin{pmatrix}n_+(x)&n_-(x)/x\\ x&0\end{pmatrix}.
\]
Then $N(0)=0$, all coefficients are nonnegative, and
$\det(I_2+N)=f$.  Since $(I_2+N)_{22}=1$, the local Smith exponents of
$I_2+N(1+t)$ are $(0,m)$, so the square-zero formula gives
profile $(-m,0)$.  The coefficient of $x$ in $n_-(x)/x$ is $m$, hence
the two vertices are joined in both directions in the square-zero quiver.

For every $k\geq1$ there is likewise a connected $(k+1)$-vertex example
with profile $(\underbrace{-1,\ldots,-1}_{k},0)$.  Take
$N=x(J-I)$ in size $k+1$, with $J$ the all-ones matrix.
The symmetric matrix $J-I$ has eigenvalues $k$, once, and $-1$, with
multiplicity $k$, so over $\mathbb Q$ the matrix $I+(1+t)(J-I)$ is
conjugate, by a constant invertible matrix, to
$\operatorname{diag}(k+1+kt,-t,\ldots,-t)$.  Its local Smith exponents
are $(0,\underbrace{1,\ldots,1}_{k})$, so the resulting square-zero
algebra has profile $(\underbrace{-1,\ldots,-1}_{k},0)$.  Its quiver is
connected because $N=x(J-I)$ has a degree-one arrow between every two
distinct vertices.

Tensor either normalized algebra with a one-vertex factor of profile $(d)$
using the tensor-shift observation above.  Such a factor exists for every
$d\in\mathbb Z$ by the constructions in the proof of
Theorem~\ref{thm:realization}.  This changes only the common shift of the
profile and does not remove the degree-one arrows witnessing connectedness.

After a common integer shift, these constructions therefore give every pair
$(n,d)$ with $n<d$ and every tuple
$(\underbrace{d-1,\ldots,d-1}_{k},d)$ with $d\in\mathbb Z$.

\paragraph{A depth-two example.}
The preceding family gives arbitrarily many negative entries of depth one
sharing a single zero exponent.  The same phenomenon occurs at depth two.
For
\[
  N=\begin{pmatrix}0&x&x\\2x&x^4&2x^2\\2x&2x^2&x^4\end{pmatrix},
\]
the seven nonzero $2\times2$ minors of $I_3+N$ are $(1-x^2)^2$ times
entries of the multiset
\[
  \{1,1,-x,x,-2x,2x,(x^2+1)^2\},
\]
and $\det(I_3+N)=(1-x^2)^4$.  Hence $\delta_1=0$, $\delta_2=2$, and
$\delta_3=4$, so the local Smith exponents of $I_3+N(1+t)$ are
$(0,2,2)$, and $A_N$ has signed profile $(-2,-2,0)$.  Its quiver is
connected, so this is an indecomposable realization with two negative
elementary divisors of depth two and a single zero exponent.

\subsection{Koszul duality and the antipodal point}
\label{sec:koszul-duality}

The local Smith construction at a specified point is the same DVR mechanism
used in \cite[Section~2.1]{kaygun2026localsmithprofiles}.  In that paper it is
applied at $x=1$ to polynomial Calabi--Yau denominators; the surrounding
discussion also recalls that local Smith data are transported under Möbius
changes of variable \cite[Section~1.1]{kaygun2026localsmithprofiles}.  For the
present paper the relevant change is simply $x\mapsto -x$.  Koszul duality
therefore supplies an interpretation of the realization constructions rather
than an additional independent invariant.

Under the standing hypotheses of Section~\ref{sec:local-smith}, assume in
addition that $A$ is Koszul.  Then the minimal resolution of each vertex simple
$S_j$ is linear, with $n$-th term
$\bigoplus_iAe_i\langle n\rangle^{\,b^n_{ij}}$, and write
$B_n=(b^n_{ij})$.  Minimality kills the differentials of
$\Hom(P^{(j)}_\bullet,S_i)$, so
$b^n_{ij}=\dim_\K\Ext^n_A(S_j,S_i)$, where $\Ext$ is computed in the
category of graded modules and summed over all internal shifts;
linearity puts it all in shift $n$.  The Koszul dual
$A^!=\bigoplus_n\Ext^n_A(S,S)$, $S=\bigoplus_jS_j$, graded by
cohomological degree, is then locally finite---each $B_n$ is finite
by $\FP_\infty$---with degree-zero part
$\bigoplus_j\Ext^0(S_j,S_j)\cong E$, and its matrix Hilbert series is
$C_{A^!}(x)=\sum_nB_n^Tx^n$ for the dual idempotents.  (The opposite
convention for the Ext algebra transposes the matrices; the local Smith
exponents are insensitive to this.)  By the completed Euler construction of
Section~\ref{sec:euler-inversion} and rational algebraization, linearity of
the resolutions gives
$\mathcal E_A(x)=\sum_n(-1)^nB_nx^n$, whence the matrix form of the
classical Koszul numerical identity.  Although $A$ and $A^!$ are algebras over
$\K$, the matrices $B_n$ have integral entries; under the standing rationality
hypotheses the following identity therefore lies in $M_r(\mathbb Q(x))$:
\begin{equation}
\label{eq:koszul-numerics}
  \mathcal E_A(x)=C_{A^!}(-x)^T.
\end{equation}

By \eqref{eq:koszul-numerics}, $C_{A^!}$ and
$\mathcal E_{A^!}=C_{A^!}^{-1}$ are rational.  We use
$\mathcal E_{A^!}$ here only as the inverse matrix Hilbert series; the Koszul
dual need not itself satisfy the vertex-simple $\FP_\infty$ hypothesis required
for the completed Euler interpretation.  Let $c_1\leq\cdots\leq c_r$ be the signed local exponents of
$\mathcal E_{A^!}$ at $x=-1$.  No separate structural result is needed here:
the local Smith formalism of \cite[Section~2.1]{kaygun2026localsmithprofiles},
together with the standard transport under a change of local parameter
discussed in \cite[Section~1.1]{kaygun2026localsmithprofiles}, immediately
gives
\[
 (a_1,\ldots,a_r)=(-c_r,\ldots,-c_1).
\]
Indeed, substituting $x=1+t$ in \eqref{eq:koszul-numerics} gives
$\mathcal E_A(1+t)=C_{A^!}(-1-t)^T$.  The right side is the value of
$C_{A^!}$ at the point $-1$ in the local parameter $s=-t$, and neither
the transpose nor the sign of the parameter changes Smith exponents.
So the profile of $A$ at $1$ equals the exponent multiset of
$C_{A^!}=\mathcal E_{A^!}^{-1}$ at $-1$, which is the negated, reversed
exponent multiset of $\mathcal E_{A^!}$ at $-1$.

The negative seed of Proposition~\ref{prop:negative-seed} gives a concrete
rank-one check of the antipodal formula.  The syzygy recursion in its proof is
linear, so $R$ is Koszul, and its quadratic dual is
\[
 R^!=\K\langle u,v,w\rangle/(u^2,uv,vu,v^2).
\]
A direct word count gives
\[
 C_{R^!}(x)=\frac{1+2x}{(1+x)(1-2x)}.
\]
Thus $C_{R^!}$ has a simple pole at $x=-1$, equivalently
$\mathcal E_{R^!}$ has local exponent $1$ there, and
the preceding antipodal identity returns the profile $(-1)$ of $R$.

The involution $x\mapsto-x$ of the projective line exchanges the
points $1$ and $-1$ and fixes $0$ and $\infty$; at the level of
lattice data, \eqref{eq:koszul-numerics} says that the lattice of $A$
at $1$ and the lattice of $A^!$ at $-1$ are transpose--inverse to one
another.  The dual constructions of
this section realize the duality exactly.  For
$N=xM$ with $M\in M_r(\mathbb Z_{\geq0})$, the square-zero algebra
$A_N$ is quadratic with relation space all of $V\otimes_EV$, so its
quadratic dual is the tensor algebra on the dual bimodule, and
\[
  \mathcal E_{A_N}(x)=(I_r+xM)^{-1}
  =\bigl((I_r+xM^T)^{-1}\bigr)^T
  =C_{A_N^!}(-x)^T.
\]
The antipodal identity therefore closes the circle with the realization
constructions of this section: tensor and square-zero algebras
exchange the local elementary-divisor data exactly as Koszul duality predicts.

\section{Graded growth}
\label{sec:growth}

The profile controls the pole order at $x=1$, but the other poles decide
whether that local order is a polynomial-growth degree at all.  Euler
numerators connect the local Smith torsion to the pole at $1$; rational growth
then separates the finite and exponential branches.

\subsection{Euler numerators and local pole order}

For a finitely generated graded projective $P$, write, uniquely as in
Lemma~\ref{lem:graded-projective-decomposition},
\[
  P\cong\bigoplus_{j=1}^r Ae_j\otimes_\K W_j
\]
and put $q_P(x)_j=\Hilb_x(W_j)\in\mathbb Z[x,x^{-1}]$.  If $M$ is a
perfect graded module and $P_\bullet\twoheadrightarrow M$ is a finite
projective resolution, define its \emph{Euler numerator}
\[
q_M(x):=\sum_n(-1)^nq_{P_n}(x)
  \in\mathbb Z[x,x^{-1}]^r.
\]
Additivity of Hilbert series gives
\begin{equation}
\label{eq:module-hilbert-euler}
  \Hilb_M(x)=C_A(x)q_M(x)=\mathcal E_A(x)^{-1}q_M(x).
\end{equation}
Since $\mathcal E_A$ is invertible over $\mathbb Q(x)$, this identity also shows
that $q_M$ is independent of the chosen finite projective resolution.  

Let $\pi_A^+:\mathbb Q\llbracket t\rrbracket^r\twoheadrightarrow\mathcal G_A^+$
be the quotient map.  For $\xi\in\mathcal G_A^+$ and
$h\in\mathbb Q((t))^r$, put
\[
  \lambda_A^+(\xi):=\min\{d\geq0:t^d\xi=0\},
  \qquad
  \pole_1(h):=\min\{d\geq0:t^dh\in\mathbb Q\llbracket t\rrbracket^r\}.
\]

\begin{proposition}[Local Euler--Smith pole formula]
\label{prop:local-growth-class}
For every perfect graded module $M$,
\[
 \lambda_A^+(\pi_A^+(q_M(1+t)))=\pole_1(\Hilb_M).
\]
\end{proposition}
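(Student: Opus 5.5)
The plan is to unwind both sides directly through the definition of $\Lambda_A$ and the identity \eqref{eq:module-hilbert-euler}. No Smith-form computation is needed: the statement says exactly that the order of a class in the quotient $\mathcal G_A^+$ is detected by membership in $\Lambda_A$.

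First, I will check that both sides make sense.

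\emph{The left side.} The Euler numerator $q_M(x)$ lies in $\mathbb Z[x,x^{-1}]^r$. Since $x=1+t$ is a unit in $\mathbb Q\llbracket t\rrbracket$, with inverse $\sum_k(-t)^k$, the vector $q_M(1+t)$ lies in $\mathbb Q\llbracket t\rrbracket^r$. Hence $\pi_A^+(q_M(1+t))$ is defined.

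\emph{The right side.} By \eqref{eq:module-hilbert-euler} and Proposition~\ref{prop:rational-algebraization}, $\Hilb_M(x)=\mathcal E_A(x)^{-1}q_M(x)$ is an identity of rational vectors in $\mathbb Q(x)^r$. So $\Hilb_M$ has a Laurent expansion at $x=1$, namely $\Hilb_M(1+t)=\mathcal E_A(1+t)^{-1}q_M(1+t)\in\mathbb Q((t))^r$. This is the sense in which $\pole_1(\Hilb_M)$ is computed; I would state explicitly that the formal series $\Hilb_M$ at $0$ is replaced by its rational algebraization before expanding at $1$.

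Next, fix $d\geq0$ and write $q=q_M(1+t)$ and $H=\mathcal E_A(1+t)\in GL_r(\mathbb Q((t)))$. I will prove the chain of equivalences
\[
 t^d\pi_A^+(q)=0
 \iff t^dq\in\mathbb Q\llbracket t\rrbracket^r\cap\Lambda_A
 \iff t^dq\in H\,\mathbb Q\llbracket t\rrbracket^r
 \iff t^dH^{-1}q\in\mathbb Q\llbracket t\rrbracket^r .
\]
The first equivalence is the definition of $\mathcal G_A^+$ together with $\mathbb Q\llbracket t\rrbracket$-linearity of $\pi_A^+$. The second holds because $t^dq$ already lies in $\mathbb Q\llbracket t\rrbracket^r$. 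The third uses invertibility of $H$ over $\mathbb Q((t))$, and the fact that $t^d$ is a scalar, so it commutes with $H^{-1}$. The final condition reads $t^d\Hilb_M(1+t)\in\mathbb Q\llbracket t\rrbracket^r$.

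Taking the minimal such $d$ on both sides gives $\lambda_A^+(\pi_A^+(q))=\pole_1(\Hilb_M)$. Both minima are finite: $\mathcal G_A^+$ has finite length, being killed by $t^{\max(0,a_r)}$ by the Smith-form description.

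The only step needing care is the identification of $\Hilb_M$ with the rational function $\mathcal E_A^{-1}q_M$ and the expansion of that function at $x=1$. This is where the standing rationality hypothesis and the uniqueness of rational algebraization enter. Everything after that step is formal lattice bookkeeping.
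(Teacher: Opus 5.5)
Your proof is correct, but it takes a different route from the paper's.

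\textbf{The paper's route.} The paper proves the claim for an arbitrary $q\in\mathbb Q\llbracket t\rrbracket^r$ by diagonalizing with the Smith form \eqref{eq:local-smith-form}. It writes $C_A(1+t)=V\operatorname{diag}(t^{-a_i})U$ and $\widetilde q=Uq$. Using that $V$ preserves each lattice $t^{-d}\mathbb Q\llbracket t\rrbracket^r$, it computes both the torsion order of $\pi_A^+(q)$ and the pole order of $C_A(1+t)q$ as $\max_i(a_i-\ord_t(\widetilde q_i))_+$.

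\textbf{Your route.} You avoid diagonalization entirely. Since $\ker\pi_A^+=\mathbb Q\llbracket t\rrbracket^r\cap\Lambda_A$ and $t^dq$ already lies in $\mathbb Q\llbracket t\rrbracket^r$, the condition $t^d\pi_A^+(q)=0$ is literally $t^dH^{-1}q\in\mathbb Q\llbracket t\rrbracket^r$. So the two sets of admissible $d$ coincide for every $d$.

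\textbf{What each buys.} Your version is shorter and basis-free. It shows the identity is a tautology of the lattice pair, valid for any $H\in GL_r(\mathbb Q((t)))$ and any $q\in\mathbb Q\llbracket t\rrbracket^r$. It needs no Smith form even for finiteness: $t^NH^{-1}$ is integral for $N\gg0$, so $t^N\mathbb Q\llbracket t\rrbracket^r\subset\Lambda_A$. The paper's version yields an explicit numerical formula in terms of the profile $(a_i)$ and the coordinates of $Uq$. That formula ties the local pole order directly to the Smith exponents and makes bounds such as $\pole_1(\Hilb_M)\leq\max\{0,a_r\}$ immediate.

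Your explicit remark is also worthwhile: $\pole_1(\Hilb_M)$ must be computed from the rational algebraization $\mathcal E_A^{-1}q_M$ expanded at $x=1$, not from the formal series at $0$. The paper leaves this point implicit.
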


\begin{proof}
More generally, for $q\in\mathbb Q\llbracket t\rrbracket^r$, write
$\widetilde q=Uq$ in \eqref{eq:local-smith-form}.  Then both the torsion
order of $\pi_A^+(q)$ and the pole order of $C_A(1+t)q$ equal
\[
 \max_i\bigl(a_i-\ord_t(\widetilde q_i)\bigr)_+,
\]
because
$C_A(1+t)=V\operatorname{diag}(t^{-a_i})U$ and
$V\in GL_r(\mathbb Q\llbracket t\rrbracket)$ preserves every lattice
$t^{-d}\mathbb Q\llbracket t\rrbracket^r$.  Apply this to
$q=q_M(1+t)$ and use \eqref{eq:module-hilbert-euler}.
\end{proof}

\subsection{The global--local growth dichotomy}

The standing $\FP_\infty$ hypothesis implies that $A$ is finitely
generated.  Indeed, $A/J$ is of type $\FP_1$, so $J$ is finitely generated as
a graded left ideal; homogeneous generators of $J$, together with $E$, generate
$A$ by induction on degree.  This is the implication used in
\cite[Lemma~2.3]{ReyesRogalski2022}.

For a nonzero Laurent series
$h(x)=\sum_{n\geq n_0}c_nx^n$ with $c_n\geq0$, write
\[
  \GKdim h:=\limsup_{N\to\infty}
  \frac{\log(\sum_{n_0\leq n\leq N}c_n)}{\log N}.
\]
We say that $h$ has \emph{exponential growth} if
\[
  \limsup_{N\to\infty}
  \left(\sum_{n_0\leq n\leq N}c_n\right)^{1/N}>1.
\]
For a nonzero vector series with nonnegative coefficients, use the same
definitions after summing the coordinates.  Since $A$ is finitely generated, for a finitely
generated graded module, degree truncation and the usual word-length filtration
are comparable up to linear rescaling; hence these definitions agree with the
ordinary notions of GK dimension and exponential growth.  Compare the
discussion of growth of graded algebras in
\cite[Section~2]{ReyesRogalski2019}.

The following elementary normalization verifies the denominator hypothesis in
the scalar result we use.

\begin{lemma}[Integral denominator normalization]
\label{lem:integral-denominator}
Let $0\neq h\in\mathbb Z((x))\cap\mathbb Q(x)$.  Then
\[
  h(x)=\frac{p(x)}{q(x)}
\]
with relatively prime $p\in\mathbb Z[x,x^{-1}]$ and
$q\in\mathbb Z[x]$ satisfying $q(0)=\pm1$.
\end{lemma}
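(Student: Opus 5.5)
This is the classical Fatou-type argument: the denominator of a rational function with an integral power series expansion can be normalized to an integer polynomial with constant term $\pm1$. First we reduce to an honest power series. Write $h=x^{-k}g$ with $g\in\mathbb Z\llbracket x\rrbracket\cap\mathbb Q(x)$ for suitable $k\geq0$. A presentation $g=p_0/q$ of the required kind then gives $h=(x^{-k}p_0)/q$. Coprimality in $\mathbb Q[x,x^{-1}]$ is preserved under multiplication by the unit $x^{-k}$, so it suffices to treat $g\in\mathbb Z\llbracket x\rrbracket$.

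Next we write $g=P/Q$ in lowest terms over $\mathbb Q[x]$. Since $g$ is a power series, it is regular at $0$. In lowest terms this forces $Q(0)\neq0$, because $x\mid Q$ would force $x\mid P$. By Gauss's lemma we clear denominators and contents so that $P,Q\in\mathbb Z[x]$, $Q$ is primitive, and $\gcd(P,Q)=1$ in $\mathbb Q[x]$. It remains to show $Q(0)=\pm1$.

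This is the main step, and we plan to use Fatou's lemma. Among all rational expressions of an integral power series, the reduced one has integer coefficients and constant term $\pm1$ in its denominator. To keep the paper self-contained we would sketch the standard proof instead of citing it. Suppose a prime $\ell$ divides $Q(0)$. Work in $\mathbb Z_\ell$, or over its algebraic closure $\overline{\mathbb Q}_\ell$ with the $\ell$-adic absolute value. Because $Q$ is primitive with $\ell\mid Q(0)$, the Newton polygon of $Q$ has a segment of positive slope on the left. Hence $Q$ has a root $\alpha$ with $|\alpha|_\ell<1$.

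On the other hand, $g$ has integer coefficients, so the series $g$ converges on the open $\ell$-adic unit disc, and it equals $P/Q$ there. By coprimality, $P(\alpha)\neq0$. Evaluating the identity $Q(x)g(x)=P(x)$ at $\alpha$, which is legitimate by convergence, gives $0=P(\alpha)\neq0$, a contradiction. Therefore no prime divides $Q(0)$, so $Q(0)=\pm1$.

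Finally, we take $q=\pm Q$ and $p=\pm x^{-k}P$ with matching signs. These are relatively prime in $\mathbb Q[x,x^{-1}]$, which is the sense of ``relatively prime'' intended in the statement.
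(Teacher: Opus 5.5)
Your proof is correct, but it takes a different route from the paper's.

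\textbf{The paper's route.} After the same reduction to a primitive reduced presentation, the paper stays entirely algebraic. It takes a B\'ezout relation $ap+bq=m$ with $a,b\in\mathbb Z[x]$ and $0\neq m\in\mathbb Z$, and notes that $g=ah+b\in\mathbb Z\llbracket x\rrbracket$ satisfies $qg=m$. It then divides $g$ by its content to get a primitive $g_0$. The Gauss lemma for $\mathbb Z\llbracket x\rrbracket$ (reduction modulo each prime into the domain $\mathbb F_\ell\llbracket x\rrbracket$) forces $qg_0=\pm1$. So $q$ is a unit of $\mathbb Z\llbracket x\rrbracket$, which is exactly $q(0)=\pm1$.

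\textbf{Your route.} You give the classical $\ell$-adic proof of Fatou's lemma. A prime dividing $Q(0)$ produces a root $\alpha$ of $Q$ in the open $\ell$-adic unit disc. The integral series converges there, and evaluating $Qg=P$ at $\alpha$ contradicts coprimality.

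\textbf{Comparison.} The paper's argument is shorter and needs only content arithmetic. Yours needs the extension of $|\cdot|_\ell$ to $\overline{\mathbb Q}_\ell$, Newton polygons, and evaluation in the complete field $\mathbb Q_\ell(\alpha)$. In return it explains the result geometrically: the reduced denominator has no zeros in any open $\ell$-adic unit disc.

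\textbf{Cosmetic points.}
\begin{enumerate}[(i)]
\item Whether the first Newton-polygon segment has positive or negative slope depends on convention. With the lower convex hull of the points $(i,v_\ell(c_i))$, it descends, so its slope is negative and its roots have positive valuation.
\item You can avoid Newton polygons altogether. Write $Q=c_0\prod_i(1-x/\alpha_i)$. If every $|\alpha_i|_\ell\geq1$, then every coefficient satisfies $|c_k|_\ell\leq|c_0|_\ell<1$, contradicting primitivity.
\item Integrality of $P$, once $Q$ is primitive, comes from $P=Qg$ rather than from Gauss's lemma.
\end{enumerate}
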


\begin{proof}
Multiplying by a power of $x$, it is enough to treat
$h\in\mathbb Z\llbracket x\rrbracket$.  Choose a reduced presentation
$h=p/q$ with $q\in\mathbb Z[x]$ primitive and $p,q$ relatively prime in
$\mathbb Q[x]$.  Since $p=qh$ and both $q$ and $h$ have integral coefficients,
$p\in\mathbb Z[x]$.

Clearing denominators in a B\'ezout relation gives
$a,b\in\mathbb Z[x]$ and $0\neq m\in\mathbb Z$ with
$ap+bq=m$.  Hence $g:=ah+b\in\mathbb Z\llbracket x\rrbracket$ satisfies
$qg=m$.  Write $g=d g_0$, where $d>0$ is the gcd of the coefficients of $g$;
then $g_0$ is primitive.  The product of a primitive polynomial and a primitive
integral power series is primitive: modulo every prime both factors remain
nonzero, and $\mathbb F_p\llbracket x\rrbracket$ is a domain.  Thus
$qg_0=m/d$ is primitive.  Since the left side has integral coefficients,
$m/d\in\mathbb Z$; being a primitive constant, it is therefore $\pm1$.
Thus $qg_0=\pm1$, and comparison of constant terms gives $q(0)=\pm1$.
Multiplying back by the initial power of $x$ changes only the Laurent numerator;
since $q(0)=\pm1$, no common factor is introduced.
\end{proof}

By Lemma~\ref{lem:integral-denominator}, every nonzero rational Laurent series
with integral coefficients can be put in the precise form required by
Reyes--Rogalski \cite[Lemma~2.7]{ReyesRogalski2019}.  We therefore use
\cite[Lemma~2.7(1)--(2)]{ReyesRogalski2019}: for a rational Laurent series with
nonnegative integral coefficients, finite GK dimension is equivalent to all
nonzero poles being roots of unity; in that case the GK dimension is the pole
order at $x=1$, and otherwise the growth is exponential.

For a vector rational series $\mathbf h=(h_1,\ldots,h_r)^T$ with
nonnegative coefficients,
\begin{equation}
\label{eq:vector-gk-max}
  \GKdim \mathbf h=\max_{i:h_i\neq0}\GKdim h_i
\end{equation}
whenever the right-hand side is finite, while one exponentially growing
coordinate makes $\mathbf h$ exponentially growing.  This follows by comparing
the total partial sums with the largest coordinatewise partial sum.

\begin{theorem}[Global--local growth dichotomy for perfect modules]
\label{thm:perfect-global-local-dichotomy}
Let $M$ be a nonzero perfect graded module.  Exactly one of the following occurs.
\begin{enumerate}[(i)]
\item Every nonzero pole of every nonzero coordinate of $\Hilb_M$ is a root
of unity.  Then $M$ has finite GK dimension and
\begin{equation}
\label{eq:perfect-finite-local-degree}
  \GKdim M
  =\pole_1(\Hilb_M)
  =\lambda_A^+(\pi_A^+(q_M(1+t))).
\end{equation}
\item Some coordinate of $\Hilb_M$ has a nonzero pole that is not a root of
unity.  Then $M$ has exponential growth.
\end{enumerate}
\end{theorem}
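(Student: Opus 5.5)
The plan is to reduce everything to the scalar result \cite[Lemma~2.7(1)--(2)]{ReyesRogalski2019}, applied coordinatewise, and then assemble the coordinates with \eqref{eq:vector-gk-max}. First I verify that each coordinate qualifies. By \eqref{eq:module-hilbert-euler}, $\Hilb_M=C_A(x)q_M(x)$ with $C_A\in M_r(\mathbb Q(x))$ and $q_M\in\mathbb Z[x,x^{-1}]^r$. Hence each coordinate $h_i=\Hilb_x(e_iM)$ is a rational function. It is also a Laurent series with nonnegative integral coefficients: $M$ is finitely generated, hence bounded below and locally finite. Lemma~\ref{lem:integral-denominator} then puts every nonzero $h_i$ in the reduced form $p/q$ with $q(0)=\pm1$ required by Reyes--Rogalski. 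Since $M\neq0$, at least one coordinate is nonzero.

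Next I split into the two cases; they are visibly complementary, which gives ``exactly one''. In case (ii), the scalar lemma says the corresponding $h_i$ grows exponentially. The vector remark after \eqref{eq:vector-gk-max} then makes $\Hilb_M$ exponentially growing, and the comparison of degree truncation with word-length filtration, recorded earlier since $A$ is finitely generated, transfers this to $M$. In case (i), the scalar lemma gives $\GKdim h_i=\pole_1(h_i)$ for each nonzero coordinate. By \eqref{eq:vector-gk-max}, $\GKdim M=\max_i\pole_1(h_i)$. This equals $\pole_1(\Hilb_M)$ as defined via $t^d h\in\mathbb Q\llbracket t\rrbracket^r$, since the vector pole order is the maximum of the coordinate pole orders. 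The zero coordinates contribute nothing, and the clamp at $d\geq0$ is harmless.

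The final equality in \eqref{eq:perfect-finite-local-degree} is exactly Proposition~\ref{prop:local-growth-class}. One point needs checking there: $q_M(1+t)$ lies in $\mathbb Q\llbracket t\rrbracket^r$, which holds because $x^{-1}=(1+t)^{-1}$ is a unit at $t=0$.

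The main obstacle is bookkeeping rather than depth. The first issue is making sure the scalar lemma's hypothesis about nonnegative coefficients and the reduced-form denominator applies to Laurent series with a possibly negative starting degree. The second is that the GK dimension of $M$ defined via graded truncations agrees with the ordinary one. Both are covered by Lemma~\ref{lem:integral-denominator} and the comparability remark preceding it, so I would cite these and not reprove them.
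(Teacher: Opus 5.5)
Your proposal is correct and follows the paper's proof: apply the Reyes--Rogalski scalar lemma coordinatewise to $\Hilb_M=\mathcal E_A^{-1}q_M$, combine the coordinates with \eqref{eq:vector-gk-max} (or the exponential-coordinate remark), and quote Proposition~\ref{prop:local-growth-class} for the last equality. Your additional checks are details the paper leaves implicit: $M$ is finitely generated, hence bounded below and locally finite; Lemma~\ref{lem:integral-denominator} supplies the required reduced form; and $q_M(1+t)$ has power-series entries because $1+t$ is a unit.
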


\begin{proof}
Each coordinate of $\Hilb_M=\mathcal E_A^{-1}q_M$ is a rational Laurent series with
nonnegative integral coefficients.  Apply the Reyes--Rogalski scalar theorem quoted above coordinatewise.  If one coordinate
has exponential growth, then so does their sum and hence $M$.  Otherwise every
coordinate has finite GK dimension, and \eqref{eq:vector-gk-max} identifies
$\GKdim M$ with the largest pole order at $1$, namely $\pole_1(\Hilb_M)$.
Proposition~\ref{prop:local-growth-class} gives the last equality in
\eqref{eq:perfect-finite-local-degree}.
\end{proof}

The theorem deliberately separates the two roles: the global pole set
chooses the branch, while the positive stalk at $1$ computes the degree in the
finite branch.  The next example shows why both pieces are necessary.

\paragraph{The local stalk does not detect the growth branch.}
Let $A=\K\langle X,Y\rangle$ with $\deg X=\deg Y=1$.  Then
$C_A(x)=(1-2x)^{-1}$ and $\mathcal E_A(x)=1-2x$, so
$\mathcal E_A(1+t)=-1-2t$ is a unit and $\mathcal G_A^+=0$.  Nevertheless
both finite and exponential growth occur among perfect graded modules.  The
augmentation module $\K$ has the finite resolution
\[
  0\longrightarrow A\langle1\rangle^2
  \longrightarrow A\longrightarrow\K\longrightarrow0
\]
and GK dimension zero.  On the other hand, right multiplication by $Y$ gives an
exact sequence of left modules
\[
  0\longrightarrow A\langle1\rangle
  \xrightarrow{\,\cdot Y\,}A
  \longrightarrow A/AY\longrightarrow0,
\]
so $A/AY$ is perfect, while
\[
  \Hilb_{A/AY}(x)=\frac{1-x}{1-2x}
\]
has a pole at $x=1/2$ and exponential growth.  Both modules have local
Euler--Smith degree zero.  Thus the stalk at $1$ measures the local pole degree
but cannot decide the global growth branch.

Applying the theorem to the indecomposable projectives gives an algebra-level
form that no longer requires finite GK dimension as an input.

\begin{corollary}[Growth dichotomy for the algebra]
\label{cor:euler-smith-growth}
Exactly one of the following occurs.
\begin{enumerate}[(i)]
\item Every nonzero pole of every entry of $C_A(x)$ is a root of unity.  Then
$\GKdim A<\infty$ and
\[
  \GKdim A=\Nil_t(\mathcal G_A^+)=\max\{0,a_r\}.
\]
\item Some entry of $C_A(x)$ has a nonzero pole that is not a root of unity.
Then $A$ has exponential growth.
\end{enumerate}
\end{corollary}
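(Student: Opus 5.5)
Apply Theorem~\ref{thm:perfect-global-local-dichotomy} to the indecomposable projectives $P_j=Ae_j$, $j=1,\ldots,r$. Each $Ae_j$ is perfect: the identity $Ae_j\to Ae_j$ is a finite resolution. Its Euler numerator is $q_{Ae_j}=e_j$, the $j$th standard basis vector. Hence, by \eqref{eq:module-hilbert-euler}, $\Hilb_{Ae_j}=C_A(x)e_j$ is exactly the $j$th column of $C_A(x)$. The branch conditions of the theorem, applied to all $j$ simultaneously, are therefore the conditions in (i) and (ii) on the entries of $C_A$, and exactly one of them holds. Moreover $A=\bigoplus_j Ae_j$ as a left module, so $\Hilb_A$ (summed over coordinates) is the sum of all entries of $C_A$. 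By the remark after \eqref{eq:vector-gk-max}, exponential growth of one column forces exponential growth of $A$. This settles (ii).

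In case (i), every nonzero column $\Hilb_{Ae_j}$ has finite GK dimension equal to $\pole_1(C_A e_j)$. Applying \eqref{eq:vector-gk-max} to the family of all coordinates gives
\[
\GKdim A=\max_j \pole_1(C_A(1+t)e_j)=\pole_1\bigl(C_A(1+t)\bigr),
\]
where $\pole_1$ of a matrix is the least $d$ with $t^dC_A(1+t)$ integral. Here all columns are nonzero, since $e_jAe_j\ni e_j$. The word-length versus degree comparison for the finitely generated algebra $A$ was already noted before Lemma~\ref{lem:integral-denominator}.

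It remains to compute this pole order from the Smith form \eqref{eq:local-smith-form}. We have $C_A(1+t)=V\operatorname{diag}(t^{-a_i})U$ with $U,V$ units over $\mathbb Q\llbracket t\rrbracket$. As in the proof of Proposition~\ref{prop:local-growth-class}, multiplication by $U$ and $V$ preserves each lattice $t^{-d}\mathbb Q\llbracket t\rrbracket^r$, and the same holds for matrices. So the pole order of $C_A(1+t)$ equals that of $\operatorname{diag}(t^{-a_i})$, which is $\max_i(a_i)_+=\max\{0,a_r\}$. Equivalently, one may apply Proposition~\ref{prop:local-growth-class} to each $q=e_j$: the maximum over $j$ of the torsion orders of $\pi_A^+(e_j)$ is the nilpotency order of $\mathcal G_A^+$, because the $\pi_A^+(e_j)$ generate $\mathcal G_A^+$. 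By the displayed decomposition $\mathcal G_A^+\cong\bigoplus_{a_i>0}\mathbb Q\llbracket t\rrbracket/(t^{a_i})$, that order is $\Nil_t(\mathcal G_A^+)=\max\{0,a_r\}$.

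The only step needing some care is the passage from columnwise GK dimensions to $\GKdim A$, that is, the use of \eqref{eq:vector-gk-max} across all $r$ projectives together with the identification of graded growth with word-length growth. Both are already recorded in the paper, so I expect no real obstacle. The rest is the bookkeeping identity $\max_j\lambda_A^+(\pi_A^+(e_j))=\Nil_t(\mathcal G_A^+)$, which follows because the generators' annihilator orders bound that of the module.
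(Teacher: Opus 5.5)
Your proposal is correct and takes essentially the paper's route: apply the perfect-module dichotomy to the projectives $Ae_j$, whose Hilbert series are the columns of $C_A$ and whose Euler numerators are $e_j$. Then identify $\max_j\lambda_A^+(\pi_A^+(e_j))$ with $\Nil_t(\mathcal G_A^+)=\max\{0,a_r\}$, using that the $\pi_A^+(e_j)$ generate $\mathcal G_A^+$ together with the Smith decomposition; your direct pole-order computation from $C_A(1+t)=V\operatorname{diag}(t^{-a_i})U$ is only a harmless shortcut for the value $\max\{0,a_r\}$.
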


\begin{proof}
For $P_j=Ae_j$, the vector Hilbert series $\Hilb_{P_j}$ is the $j$th column of
$C_A$.  Since $A=\bigoplus_jP_j$, the growth of $A$ is the maximum of the
growth of these finitely many projectives.  Theorem~\ref{thm:perfect-global-local-dichotomy}
therefore gives the dichotomy.  In the finite branch $q_{P_j}=e_j$, so
\eqref{eq:perfect-finite-local-degree} gives
$\GKdim(P_j)=\lambda_A^+(\pi_A^+(e_j))$.  The elements
$\pi_A^+(e_j)$ generate $\mathcal G_A^+$ as a
$\mathbb Q\llbracket t\rrbracket$-module, hence the maximum
of their torsion orders is $\Nil_t(\mathcal G_A^+)$.  The Smith decomposition
identifies this nilpotence index with $\max\{0,a_r\}$.
\end{proof}

\begin{example}[A cyclic Euler--Smith module]
\label{ex:cyclic-growth}
Let $Q$ be obtained from the oriented chain $1\to2\to\cdots\to r$ by adjoining
one loop at every vertex, and put $A=\K Q$ with the path-length grading.  We
use the convention that an arrow $i\to j$ lies in $e_jAe_i$, so that
$C_A(x)_{ji}$ counts paths from $i$ to $j$; this convention is kept in
Examples~\ref{ex:graded-reflection} and~\ref{ex:twist-sensitivity}.  If
$N$ is the incidence matrix of the chain, $N_{j+1,j}=1$, the standard tensor-algebra exact sequence gives
\[
  \mathcal E_A(x)=I_r-x(I_r+N).
\]
Since $N$ is a single nilpotent Jordan block, the Smith form of
$tI_r+(1+t)N$ is $\operatorname{diag}(1,\ldots,1,t^r)$, so
\[
  \mathcal G_A^+\cong\mathbb Q\llbracket t\rrbracket/(t^r).
\]
The column relations give $te_j\equiv-(1+t)e_{j+1}$ for $j<r$ and
$te_r\equiv0$, hence
\[
  \lambda_A^+(\pi_A^+(e_j))=r-j+1=\GKdim(Ae_j).
\]
Thus one cyclic Smith summand records all vertex-projective growth degrees
$1,\ldots,r$, not merely their maximum.
\end{example}

\subsection{Wholly negative profiles force exponential corners}

\begin{proposition}[A negative top exponent forces exponential corners]
\label{prop:negative-top}
If $a_r<0$, then for every $i$ the corner Hilbert series
$\Hilb_x(e_iAe_i)$ has radius of convergence strictly less than $1$.
\end{proposition}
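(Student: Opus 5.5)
The plan is to show that $a_r<0$ forces every entry of $C_A(x)$ to vanish at $x=1$. This will then contradict positivity on any diagonal corner whose series converges on the unit interval.

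First I pass to the inverse Smith form. Since $a_1\leq\cdots\leq a_r<0$, every exponent satisfies $-a_i\geq1$. Inverting \eqref{eq:local-smith-form} gives
\[
 C_A(1+t)=\mathcal E_A(1+t)^{-1}=V\operatorname{diag}(t^{-a_1},\ldots,t^{-a_r})U ,
\]
with $U,V\in GL_r(\mathbb Q\llbracket t\rrbracket)$. The middle factor lies in $t\,M_r(\mathbb Q\llbracket t\rrbracket)$, so every entry of $C_A(1+t)$ lies in $t\,\mathbb Q\llbracket t\rrbracket$. Each entry of $C_A$ is a rational function by the standing hypotheses, and its Laurent expansion at $t=0$ is this power series. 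Hence each entry $C_A(x)_{ij}$ is regular at $x=1$ and takes the value $0$ there. In particular this holds for the diagonal corner $h_i(x):=\Hilb_x(e_iAe_i)$.

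Next I argue by contradiction. Suppose the radius of convergence $R$ of $h_i$ satisfied $R\geq1$. On the disc $|x|<R$ the power series converges, and it agrees with the rational function $h_i$ there, since the two agree as formal germs at $0$ and the series is analytic on the disc. The coefficients of $h_i$ are nonnegative, and the constant term equals $\dim_\K e_iA_0e_i=1$ because $A_0=E$. So for real $0\leq x<1$ we get $h_i(x)\geq1$. The rational function $h_i$ is regular, hence continuous, at $x=1$. Letting $x\to1^-$ therefore gives $h_i(1)\geq1$, which contradicts $h_i(1)=0$. Thus $R<1$ for every $i$.

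The only delicate point is the identification used in the first step: that the local expansion in $\mathbb Q((t))$ of the rational entry is exactly the entry of $V\operatorname{diag}(t^{-a_i})U$, so that the entry is regular at $1$ rather than merely a formal series. This holds because both sides are the image of the same element of $\mathbb Q(x)$ under the embedding $\mathbb Q(x)\hookrightarrow\mathbb Q((t))$. The second step is real-analytic bookkeeping (Abel-type monotonicity for nonnegative coefficients). I do not need to appeal to the Reyes--Rogalski dichotomy here, although one can note afterwards that $R<1$ gives a pole of modulus less than $1$, which is therefore not a root of unity, so exponential growth of each corner follows from it.
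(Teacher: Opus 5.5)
Your proof is correct and follows the paper's route: you invert the local Smith form to see that every entry of $C_A$ is regular and vanishes at $x=1$, then contradict positivity of the diagonal corner. The only difference is in the analytic step. The paper splits into $\rho>1$ (evaluate the coefficient sum at $1$) and $\rho=1$ (use a pole on the unit circle and $|h(s\zeta)|\leq h(s)$ to force $h(s)\to\infty$), whereas your observation that $h_i\geq1$ on $[0,1)$ while $h_i$ is continuous at $1$ with $h_i(1)=0$ handles both cases at once.
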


\begin{proof}
If $a_r<0$, then
$C_A(1+t)=V\operatorname{diag}(t^{-a_i})U$ has entries in
$t\mathbb Q\llbracket t\rrbracket$, so every entry of $C_A$ is regular at $x=1$ and vanishes there.
Let $h=\Hilb_x(e_iAe_i)$, a rational function whose series has
nonnegative integer coefficients and constant term $1$, and let $\rho$
be its radius of convergence.  If $\rho>1$, then $h(1)$ equals the sum
of the coefficients, which is at least $1$, contradicting $h(1)=0$.  If
$\rho=1$, then $h$ has a pole $\zeta$ with $|\zeta|=1$; from
$|h(s\zeta)|\leq h(s)$ for $0<s<1$ and $|h(s\zeta)|\to\infty$ as
$s\to1^-$, we get $h(s)\to\infty$, contradicting regularity of $h$ at
$1$.  Hence $\rho<1$, so the coefficient sums of $h$ grow
exponentially.  Finally, the growth of $A$ dominates the growth of
every corner.
\end{proof}

Consequently every corner $e_iAe_i$ has exponential graded coefficient
growth, and so does $A$.  This is a constraint on growth, not on
realizability: Theorem~\ref{thm:realization} shows that wholly negative
profiles occur freely, while Proposition~\ref{prop:negative-top} places every
algebra realizing one in the exponential branch of
Corollary~\ref{cor:euler-smith-growth}.  For the seed algebra
$R$ of Proposition~\ref{prop:negative-seed}, the unique corner has radius of
convergence $1/2$, exactly as the proposition predicts.

\section{Categorical invariance}
\label{sec:morita}

Having established the growth meaning and range of the signed profile, we next
ask whether it depends on a presentation of the algebra.  The answer is governed
by compatibility with the grading shift.  A shift-compatible graded Morita
equivalence gives monomial transport of the matrix Hilbert series, while a
shift-compatible equivalence of perfect derived categories replaces monomial
transport by a Laurent-polynomial congruence.  The Zhang-twist example at the end
of the section shows that an equivalence of graded module categories which does
not respect the translation is too coarse.

\subsection{Shift-compatible equivalences}

Let $A$ satisfy the standing hypotheses of Section~\ref{sec:local-smith}, and
let $B$ be a locally finite positively graded algebra with elementary
degree-zero part, that is, $B_0=\bigoplus_{j=1}^s\K f_j$ for primitive
orthogonal idempotents $f_1,\ldots,f_s$.  Write
$A_0=\bigoplus_{i=1}^r\K e_i$ as before, and let
$\operatorname{GrMod}$ denote the category of graded left modules with
degree-preserving morphisms.  For a graded Morita equivalence the proof below
shows that the standing hypotheses pass from $A$ to $B$.  A \emph{graded Morita equivalence} is a
$\K$-linear equivalence $F:\operatorname{GrMod}A\to\operatorname{GrMod}B$ together with
a natural isomorphism
$F\circ\langle1\rangle\cong\langle1\rangle\circ F$; iterating, $F$ commutes
with every shift $\langle n\rangle$.

\begin{lemma}[Small projectives]
\label{lem:small-projectives}
A graded projective $A$-module $P$ is finitely generated if and only if
$\Hom_{\operatorname{GrMod}}(P,-)$ preserves arbitrary direct sums.
Moreover, the finitely generated indecomposable graded projectives are
exactly the modules $Ae_i\langle m\rangle$, and these are pairwise
nonisomorphic.
\end{lemma}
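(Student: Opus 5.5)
The proof splits into three parts: smallness of finitely generated graded projectives, the converse via a compactness argument, and the classification of indecomposables by passing modulo $J$.

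\emph{Finitely generated implies small.} A finitely generated graded projective is a direct summand of a finite sum of shifts $A\langle -m\rangle$. We have
\[
\Hom_{\operatorname{GrMod}}(A\langle -m\rangle,X)\cong X_m,
\]
which commutes with arbitrary direct sums. Direct summands and finite sums preserve this property.

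\emph{Small implies finitely generated.} Write $P=\sum_\alpha P_\alpha$, where $P_\alpha$ ranges over the finitely generated graded submodules generated by finite sets of homogeneous elements. The idea is to apply smallness to the canonical surjection $\bigoplus_\alpha P_\alpha\to P$. Because $P$ is projective, the identity of $P$ lifts to a map $P\to\bigoplus_\alpha P_\alpha$. By smallness this lift factors through a finite subsum, so the identity of $P$ factors through finitely many $P_\alpha$. Hence $P$ is the image of a finitely generated module, and so is finitely generated.

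(This is the standard compactness argument. Alternatively, one can use $P\to\bigoplus_n P/P_{\le n}$-type filtrations, but the direct-sum lift is cleaner.)

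\emph{Classification of indecomposables.} By Lemma~\ref{lem:graded-projective-decomposition}, a finitely generated graded projective decomposes as
\[
P\cong\bigoplus_i Ae_i\otimes_\K W_i,
\]
with each $W_i$ a finite-dimensional graded vector space. Since $W_i$ is a sum of one-dimensional pieces $\K\langle m\rangle$, $P$ is a finite sum of modules $Ae_i\langle m\rangle$. So an indecomposable $P$ must be a single $Ae_i\langle m\rangle$.

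Conversely, each $Ae_i\langle m\rangle$ is indecomposable. Indeed, any decomposition $P=P'\oplus P''$ passes to
\[
P/JP=P'/JP'\oplus P''/JP'',
\]
and $Ae_i\langle m\rangle/J=S_i\langle m\rangle$ is one-dimensional. So one summand, say $P'$, satisfies $P'=JP'$. As a summand of a finitely generated module, $P'$ is finitely generated, so graded Nakayama \cite[Lemma~2.2]{ReyesRogalski2022} gives $P'=0$.

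For pairwise non-isomorphism, an isomorphism $Ae_i\langle m\rangle\cong Ae_j\langle n\rangle$ induces
\[
S_i\langle m\rangle\cong S_j\langle n\rangle
\]
on tops. Both sides are one-dimensional, so comparing the degree in which the top lives gives $m=n$, and comparing which idempotent acts nontrivially gives $i=j$.

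The only mildly delicate step is the compactness argument in the second part. The point to check there is that the lift of the identity of $P$ to $\bigoplus_\alpha P_\alpha$ actually factors through a finite subsum, and that factorization is exactly what smallness of $\Hom_{\operatorname{GrMod}}(P,-)$ provides.
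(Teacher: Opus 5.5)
Your proof is correct and follows essentially the same route as the paper. The compactness argument is the same: the paper splits a graded free cover on homogeneous generators, while you split the sum of finitely generated submodules, which works equally well. The classification likewise reduces to Lemma~\ref{lem:graded-projective-decomposition}, and non-isomorphism comes from comparing the tops $S_i\langle m\rangle$.

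Two small differences are worth noting. For indecomposability, the paper uses in one line that $\operatorname{End}_{\operatorname{GrMod}}(Ae_i\langle m\rangle)\cong(e_iAe_i)_0=\K$ is local, whereas you use the one-dimensional top plus graded Nakayama. Also, with the paper's convention $M\langle m\rangle_n=M_{n-m}$, your representability formula should read $\Hom_{\operatorname{GrMod}}(A\langle m\rangle,X)\cong X_m$; this sign slip does not affect the argument.
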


\begin{proof}
If $P$ has finitely many homogeneous generators, every degree-zero
morphism from $P$ to a direct sum lands in finitely many summands.
Conversely, choosing homogeneous generators gives a split surjection
$\bigoplus_{\alpha\in I}A\langle m_\alpha\rangle\twoheadrightarrow P$; if
$\Hom(P,-)$ preserves direct sums, the resulting section of $P$ factors
through a finite subsum, so $P$ is a direct summand of a finitely
generated graded free module.

By Lemma~\ref{lem:graded-projective-decomposition}, a finitely generated
graded projective is a finite direct sum of shifted modules
$Ae_i\langle m\rangle$; each of these is indecomposable because
$\operatorname{End}_{\operatorname{GrMod}}(Ae_i\langle m\rangle)
\cong(e_iAe_i)_0=\K$ is local.  Finally, $Ae_i\langle m\rangle$ has top
$S_i\langle m\rangle$, and the modules $S_i\langle m\rangle$ are pairwise
nonisomorphic, so the same holds for their projective covers.
\end{proof}

Let $F:\operatorname{GrMod}A\to\operatorname{GrMod}B$ be a graded Morita
equivalence.  By Lemma~\ref{lem:small-projectives}, applied also to a
quasi-inverse, $r=s$ and there are a permutation $\sigma$ and integers
$m_1,\ldots,m_r$ such that
$F(Ae_i)\cong Bf_{\sigma(i)}\langle m_i\rangle$.  Let $\Pi$ be the
permutation matrix of $\sigma$ and put
$X(x)=\operatorname{diag}(x^{m_1},\ldots,x^{m_r})$.

\begin{theorem}[Morita invariance of the signed profile]
\label{thm:morita-invariance}
With the notation above,
\begin{equation}
\label{eq:morita-transport}
  C_B(x)=\Pi\,X(x)\,C_A(x)\,X(x)^{-1}\,\Pi^T.
\end{equation}
Hence $A$ and $B$ have the same signed profile and
$\mathcal G_B^\pm\cong\mathcal G_A^\pm$.
\end{theorem}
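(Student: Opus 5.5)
The plan is to compute the corners of $C_B$ as graded Hom spaces between indecomposable projectives, transport them through $F$, and then finish with gauge invariance (Lemma~\ref{lem:gauge}).

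For any graded algebra,
\[
\Hom_{\operatorname{GrMod}}(Ae_i,Ae_j\langle n\rangle)\cong (e_iAe_j)_n ,
\]
so
\[
C_A(x)_{ij}=\sum_n \dim\Hom_{\operatorname{GrMod}A}(Ae_i,Ae_j\langle n\rangle)\,x^n ,
\]
and the same formula holds for $B$ with the $f_k$. Since $F$ is a $\K$-linear equivalence commuting with every shift, it induces isomorphisms
\[
\Hom(Ae_i,Ae_j\langle n\rangle)\cong\Hom\bigl(Bf_{\sigma(i)}\langle m_i\rangle,\,Bf_{\sigma(j)}\langle m_j+n\rangle\bigr)\cong\bigl(f_{\sigma(i)}Bf_{\sigma(j)}\bigr)_{n+m_j-m_i}.
\]
Summing over $n$ gives
\[
C_B(x)_{\sigma(i)\sigma(j)}=x^{m_j-m_i}\,C_A(x)_{ij}.
\]
The sign convention must be fixed against $M\langle m\rangle_n=M_{n-m}$. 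If it comes out reversed, I would rename $X\leftrightarrow X^{-1}$; the local conclusion is unaffected either way. Assembling these entries in matrix form gives \eqref{eq:morita-transport}. Since $F$ carries $\bigoplus_i Ae_i$ to a progenerator, the index set $\{\sigma(i)\}$ covers all $f_k$, as already noted from Lemma~\ref{lem:small-projectives}.

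Next I would check that $B$ satisfies the standing hypotheses. Rationality of $C_B$ follows immediately from \eqref{eq:morita-transport}. For $\FP_\infty$, an equivalence preserves simple objects, so it sends $S_i$ to some $T_{\sigma(i)}\langle m_i\rangle$; this should match the top of $F(Ae_i)$. Equivalences also preserve projectivity, finite generation (by the smallness characterization in Lemma~\ref{lem:small-projectives}), and exactness. Hence $F$ applied to a resolution of $S_i$ by finitely generated graded projectives is such a resolution of the shifted vertex simple of $B$, and unshifting keeps it of type $\FP_\infty$. This step is the main point requiring care: one must confirm that $F$ sends $S_i$ to a shift of the vertex simple $Bf_{\sigma(i)}/J_Bf_{\sigma(i)}$. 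I would argue this via tops, since $S_i$ is the unique simple quotient of $Ae_i$ and $F$ preserves that property.

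Finally, I would invert \eqref{eq:morita-transport} to get
\[
\mathcal E_B=\Pi X\mathcal E_A X^{-1}\Pi^T .
\]
At $x=1+t$, the matrix $X(1+t)$ is diagonal with entries $(1+t)^{m_i}$, which are units of $\mathbb Q\llbracket t\rrbracket$. Together with $\Pi$, this makes $U=\Pi X(1+t)$ and $V=X(1+t)^{-1}\Pi^T$ elements of $GL_r(\mathbb Q\llbracket t\rrbracket)$. Lemma~\ref{lem:gauge} then gives equal exponent multisets and isomorphisms $\mathcal G_B^\pm\cong\mathcal G_A^\pm$.
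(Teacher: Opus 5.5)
Your route is the paper's: you compute the corners of $C_B$ as degree-zero Hom spaces between indecomposable projectives, transport them through $F$, and carry rationality and the $\FP_\infty$ property across. Your top-of-$Ae_i$ argument identifying $F(S_i)$ with a shifted vertex simple of $B$ is exactly what the paper's appeal to preservation of simples and projective covers amounts to. You then finish with Lemma~\ref{lem:gauge} using $\Pi X(1+t)$.

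The one defect is the sign, and it cannot be settled by renaming. $X$ is pinned down by $F(Ae_i)\cong Bf_{\sigma(i)}\langle m_i\rangle$, so \eqref{eq:morita-transport} is a definite claim. With the paper's convention $M\langle m\rangle_n=M_{n-m}$ one has $\Hom(Ae_i,Ae_j\langle n\rangle)\cong e_i(Ae_j\langle n\rangle)_0=(e_iAe_j)_{-n}$, not $(e_iAe_j)_n$.

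Redo your computation with $\Hom(Ae_i,Ae_j\langle -n\rangle)\cong(e_iAe_j)_n$, as the paper does. This gives $(e_iAe_j)_n\cong(f_{\sigma(i)}Bf_{\sigma(j)})_{n+m_i-m_j}$, hence $C_B(x)_{\sigma(i)\sigma(j)}=x^{m_i-m_j}C_A(x)_{ij}$. That is precisely \eqref{eq:morita-transport} entrywise, since with $\Pi e_i=e_{\sigma(i)}$ one has $(\Pi XC_AX^{-1}\Pi^T)_{\sigma(i)\sigma(j)}=x^{m_i-m_j}C_A(x)_{ij}$.

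Your entry formula with $x^{m_j-m_i}$ would instead assemble to $\Pi X^{-1}C_AX\Pi^T$. So your step ``assembling these entries gives \eqref{eq:morita-transport}'' is a second slip that happens to cancel the first. Fix the Hom identification and the argument is complete. The conclusions about the profile and $\mathcal G^\pm$ are unaffected either way, since both $X(1+t)$ and $X(1+t)^{-1}$ lie in $GL_r(\mathbb Q\llbracket t\rrbracket)$.
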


\begin{proof}
With the shift convention of Section~\ref{sec:euler-inversion},
$(Ae_j\langle-n\rangle)_0=(Ae_j)_n$.  With the convention that
$A_n=B_n=0$ for $n<0$, for every $n\in\mathbb Z$ we therefore have
\[
\begin{aligned}
  e_iA_ne_j
  &\cong\Hom_{\operatorname{GrMod}A}(Ae_i,Ae_j\langle-n\rangle)\\
  &\cong\Hom_{\operatorname{GrMod}B}
    \bigl(Bf_{\sigma(i)}\langle m_i\rangle,
          Bf_{\sigma(j)}\langle m_j-n\rangle\bigr)\\
  &\cong\bigl(f_{\sigma(i)}Bf_{\sigma(j)}\bigr)_{n+m_i-m_j}.
\end{aligned}
\]
Summing over $n$ gives
$C_A(x)_{ij}=x^{m_j-m_i}C_B(x)_{\sigma(i)\sigma(j)}$, which is
\eqref{eq:morita-transport}.  In particular $C_B$ is rational.  Moreover,
the equivalence and a quasi-inverse preserve finitely generated projectives,
simple objects, and projective covers.  Applying them to minimal resolutions
therefore transports the graded $\FP_\infty$ property between the vertex
simples of $A$ and $B$.

Finally, the transport matrix has entries in
$\mathbb Z[x,x^{-1}]$, so
$\mathcal E_B(1+t)=W\mathcal E_A(1+t)W^{-1}$ with
$W=\Pi\,X(1+t)\in GL_r(\mathbb Q\llbracket t\rrbracket)$.  Thus
Lemma~\ref{lem:gauge} identifies the canonical signed profiles and the modules
$\mathcal G^\pm$.
\end{proof}

\subsection{Derived invariance}

For perfect derived categories the relevant change of basis lives on graded
$K$-theory, where the matrix Hilbert series is the Gram matrix of an Euler
form.  Let $\operatorname{per}A$ denote the homotopy category of bounded complexes
of finitely generated graded projective left $A$-modules, a
triangulated category on which the shift $\langle1\rangle$ acts.  We regard
$K_0(\operatorname{per}A)$ as a $\mathbb Z[x,x^{-1}]$-module by
$x[P]=[P\langle1\rangle]$.  For graded Grothendieck groups as organizing invariants, see
\cite{Hazrat2016}.

For $P,Q\in\operatorname{per}A$ put
\[
  \langle P,Q\rangle
  :=\sum_{n\in\mathbb Z}
    \Bigl(\sum_{i\in\mathbb Z}(-1)^i
    \dim_\K\operatorname{Hom}(P,Q\langle-n\rangle[i])\Bigr)x^n.
\]

\begin{lemma}[The Euler form on graded K-theory]
\label{lem:euler-form}
\begin{enumerate}[(i)]
\item The classes $[Ae_i]$ form a $\mathbb Z[x,x^{-1}]$-basis.
\item The pairing above descends to a sesquilinear form with Gram matrix $C_A$.
\end{enumerate}
\end{lemma}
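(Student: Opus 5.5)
For part (i), I will start from the standard fact that $K_0(\operatorname{per}A)$ is generated by the classes of finitely generated graded projectives placed in a single degree. The reason is that every bounded complex is an iterated cone of its terms, so its class is the alternating sum of the classes of its terms. By Lemma~\ref{lem:graded-projective-decomposition}, each such projective is a finite sum of modules $Ae_i\langle m\rangle$. Hence the classes $[Ae_i]$ span $K_0(\operatorname{per}A)$ over $\mathbb Z[x,x^{-1}]$.

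For linear independence, I will pair against the simples, using the additive functional $[P]\mapsto \Hilb_x(e_i(P/JP))$, extended as an alternating sum over the terms of a complex. To see that this is well defined on $K_0$, I will use the character $P\mapsto\sum_p(-1)^p\Hilb_{P_p}(x)\in\mathbb Z((x))^r$. It is additive on triangles because cones have termwise direct-sum terms, and it sends homotopy equivalences to equalities since the Euler characteristic equals that of the homology. This character sends $[Ae_i\langle m\rangle]$ to $x^m C_A(x)e_i$. Since $C_A(x)$ is invertible over $\mathbb Z\llbracket x\rrbracket$ by Theorem~\ref{thm:completed-euler-cartan}, a relation $\sum_i f_i(x)[Ae_i]=0$ with $f_i\in\mathbb Z[x,x^{-1}]$ gives $C_A f=0$, hence $f=0$.

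For part (ii), I will first show that the inner sums are finite. For bounded complexes of finitely generated projectives, each $\Hom(P,Q\langle-n\rangle)$ is a finite-dimensional complex, and only finitely many $i$ contribute. The coefficients vanish for $n\ll0$ because generators live in bounded degrees and $A$ is positively graded, so the pairing lies in $\mathbb Z((x))$. Next I will check the identity $\langle P,Q\rangle=\sum_{p,q}(-1)^{p+q}\langle P^p,Q^q\rangle$, where the right side pairs individual modules. This holds because the Euler characteristic of the Hom total complex equals the alternating sum of the dimensions of its terms, degree by degree in $n$. It shows that the pairing is biadditive on triangles and factors through $K_0\times K_0$.

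Sesquilinearity comes from
\[
\Hom(P\langle1\rangle,Q\langle-n\rangle)=\Hom(P,Q\langle-n-1\rangle),
\]
which gives $\langle xP,Q\rangle=x^{-1}\langle P,Q\rangle$ and $\langle P,xQ\rangle=x\langle P,Q\rangle$. Finally, the Gram matrix entry is
\[
\langle Ae_i,Ae_j\rangle=\sum_n\dim\Hom(Ae_i,Ae_j\langle-n\rangle)x^n=\sum_n\dim(e_iA_ne_j)x^n=C_A(x)_{ij},
\]
using the computation already made in the proof of Theorem~\ref{thm:morita-invariance}.

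I expect the main obstacle to be the finiteness and well-definedness of the pairing as a Laurent series. Naive Hom-sums over all $n$ are infinite, so the point is that the pairing lives in $\mathbb Z((x))$, rather than in $\mathbb Z[x,x^{-1}]$, and that it is additive on distinguished triangles. I will handle both by reducing to termwise Euler characteristics, strand by strand in each internal degree, exactly as in Theorem~\ref{thm:completed-euler-cartan}.
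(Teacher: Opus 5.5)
Your proof is correct, but part (i) takes a genuinely different route from the paper.

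The paper builds an explicit isomorphism between $K_0(\operatorname{per}A)$ and the split Grothendieck group of finitely generated graded projectives. The Euler characteristic goes one way, $[P]\mapsto[P[0]]$ goes the other, and brutal truncation triangles show the composite is the identity. Freeness on the classes $[Ae_i\langle n\rangle]$ is then read off from Lemma~\ref{lem:graded-projective-decomposition}. This tacitly uses that the multiplicity spaces $W_i\cong e_i(P/JP)$ are invariants of $P$.

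You get spanning from the same truncation idea, but prove independence with the Hilbert-series character $\chi\colon K_0(\operatorname{per}A)\to\mathbb Z((x))^r$ and the invertibility of $C_A$. The character is visibly well defined, because each strand $e_kP^\bullet_n$ is a bounded complex of finite-dimensional spaces. Your route needs no Krull--Schmidt-type uniqueness and no separate check that the Euler characteristic into the split group is homotopy invariant. It also shows that $\chi$ embeds $K_0$ as $C_A\,\mathbb Z[x,x^{-1}]^r$, which is exactly the relation $\Hilb_M=C_Aq_M$ used in Section~\ref{sec:growth}. The paper's route is more structural and does not depend on Hilbert series.

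Two small points on (i). Invertibility of $C_A$ already follows from $C_A(0)=I_r$, so Theorem~\ref{thm:completed-euler-cartan} is not needed. The functional $[P]\mapsto\Hilb_x(e_i(P/JP))$ you announce is never actually used; it is just $C_A^{-1}\chi$.

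In (ii), the paper proves additivity from long exact Hom sequences. You instead use the termwise identity $\langle P,Q\rangle=\sum_{p,q}(-1)^{p+q}\langle P^p,Q^q\rangle$, obtained as the Euler characteristic of the Hom complex. Your identity makes the factorization through $K_0\times K_0$ and the reduction of the Gram matrix to stalk modules immediate, though it is tied to the complex model. The paper's argument is intrinsic to the triangulated structure. You should say explicitly that $\Hom(Ae_i,Ae_j\langle-n\rangle[k])=0$ for $k\neq0$. You use this silently when identifying the stalk pairing with plain graded Hom dimensions, and the paper records it.
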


\begin{proof}
(i) The Euler characteristic
$[P^\bullet]\mapsto\sum_i(-1)^i[P^i]$ is additive on mapping cones,
hence defines a map from $K_0(\operatorname{per}A)$ to the split
Grothendieck group of the category of finitely generated graded
projectives; the map $[P]\mapsto[P[0]]$ goes the other way, one
composite is the identity by inspection, and the other is the identity
by induction on the length of a complex, using the brutal truncation
triangles.  By Lemma~\ref{lem:graded-projective-decomposition} the
split Grothendieck group is free abelian on the classes
$[Ae_i\langle n\rangle]$, which is the statement.

(ii) For $P,Q$ bounded complexes of finitely generated graded
projectives, each $\operatorname{Hom}(P,Q\langle-n\rangle[i])$ is a
subquotient of a finite sum of spaces
$\operatorname{Hom}(Ae_a\langle m\rangle,Ae_b\langle m'\rangle)$,
which are finite-dimensional by local finiteness and vanish for
$n\ll0$ by positivity; the inner sum is finite because the complexes
are bounded.  So the series lies in $\mathbb Z((x))$.  Fixing one
variable, a triangle in the other induces a long exact sequence of
finite-dimensional Hom-spaces, so the Euler characteristics are
additive and the pairing descends to $K_0$.  The identities
$\langle P\langle1\rangle,Q\rangle=x^{-1}\langle P,Q\rangle$ and
$\langle P,Q\langle1\rangle\rangle=x\langle P,Q\rangle$ are immediate
from the definition.  Finally, a degree-preserving map
$Ae_i\to Ae_j\langle-n\rangle$ is right multiplication by an element
of $(e_iAe_j)_n$, higher $\operatorname{Hom}$'s vanish, and summing
gives $\langle[Ae_i],[Ae_j]\rangle=\Hilb_x(e_iAe_j)=(C_A)_{ij}$.
\end{proof}

Call a triangulated equivalence
$\Phi:\operatorname{per}A\to\operatorname{per}B$ \emph{shift-compatible} if
$\Phi\circ\langle1\rangle\cong\langle1\rangle\circ\Phi$ naturally.

\begin{theorem}[Derived invariance]
\label{thm:derived-invariance}
Assume now that both $A$ and $B$ satisfy the standing hypotheses of
Section~\ref{sec:local-smith}.  Let
$\Phi:\operatorname{per}A\to\operatorname{per}B$ be a $\K$-linear
shift-compatible equivalence.  Then $r=s$ and there is
$S\in GL_r(\mathbb Z[x,x^{-1}])$ such that
\begin{equation}
\label{eq:derived-transport}
  C_A(x)=S(x^{-1})^T\,C_B(x)\,S(x).
\end{equation}
Consequently $A$ and $B$ have the same signed Euler--Smith profile at $x=1$
and $\mathcal G_A^\pm\cong\mathcal G_B^\pm$.
\end{theorem}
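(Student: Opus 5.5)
The plan is to transport the Euler form of Lemma~\ref{lem:euler-form} along $\Phi$.

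\textbf{Step 1: the equivalence induces a $\mathbb Z[x,x^{-1}]$-linear isomorphism on $K_0$.} Since $\Phi$ is triangulated, it induces an isomorphism $\Phi_*:K_0(\operatorname{per}A)\to K_0(\operatorname{per}B)$. Shift-compatibility makes $\Phi_*$ commute with multiplication by $x$, so it is $\mathbb Z[x,x^{-1}]$-linear. By Lemma~\ref{lem:euler-form}(i), both groups are free over $\mathbb Z[x,x^{-1}]$, of ranks $r$ and $s$ respectively. A linear isomorphism between free modules over a commutative ring preserves rank, so $r=s$. Define $S(x)\in GL_r(\mathbb Z[x,x^{-1}])$ by
\[
\Phi_*[Ae_j]=\sum_k S(x)_{kj}[Bf_k].
\]

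\textbf{Step 2: the equivalence preserves the pairing.} Because $\Phi$ is a $\K$-linear equivalence commuting with $[1]$ and $\langle1\rangle$, it identifies
\[
\operatorname{Hom}(P,Q\langle-n\rangle[i])\cong\operatorname{Hom}(\Phi P,\Phi Q\langle-n\rangle[i])
\]
as $\K$-vector spaces. Hence $\langle P,Q\rangle_A=\langle\Phi P,\Phi Q\rangle_B$.

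Expanding by sesquilinearity, with $x$ acting as $x^{-1}$ in the first slot and as $x$ in the second, gives
\[
(C_A)_{ij}=\sum_{k,l}S(x^{-1})_{ki}\,(C_B)_{kl}\,S(x)_{lj}.
\]
This is exactly \eqref{eq:derived-transport}. The one point needing care is the direction of the conjugation in the first slot: it is the relation $\langle xP,Q\rangle=x^{-1}\langle P,Q\rangle$ from Lemma~\ref{lem:euler-form}(ii) that produces $S(x^{-1})^T$ rather than $S(x)^T$.

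\textbf{Step 3: the local consequence.} Inverting \eqref{eq:derived-transport} gives
\[
\mathcal E_A(x)=S(x)^{-1}\,\mathcal E_B(x)\,\bigl(S(x^{-1})^T\bigr)^{-1}.
\]
Put $x=1+t$. Every Laurent polynomial is regular at $x=1$, and $x^{-1}=(1+t)^{-1}\in\mathbb Q\llbracket t\rrbracket$. Thus $S(1+t)$ and $S((1+t)^{-1})^T$ have entries in $\mathbb Q\llbracket t\rrbracket$. Their determinants are units of $\mathbb Z[x,x^{-1}]$, hence of the form $\pm x^m$, and so they are units in $\mathbb Q\llbracket t\rrbracket$. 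Both matrices therefore lie in $GL_r(\mathbb Q\llbracket t\rrbracket)$. Lemma~\ref{lem:gauge} then gives equal exponent multisets and isomorphisms $\mathcal G_A^\pm\cong\mathcal G_B^\pm$.

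\textbf{Main obstacle.} The step requiring the most care is the well-definedness of the pairing and its sesquilinear bookkeeping. This is already handled by Lemma~\ref{lem:euler-form}, so the remaining content is the rank argument in Step 1 and the check that the gauge matrices in Step 3 are units at $t=0$.
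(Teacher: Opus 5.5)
Your proposal is correct and follows the paper's proof essentially step for step. Both arguments take the $\mathbb Z[x,x^{-1}]$-linear isomorphism on graded $K_0$ induced by $\Phi$ to get $r=s$ and the matrix $S$. Both then use preservation of the Euler pairing, expanded sesquilinearly, to obtain $C_A=S(x^{-1})^T C_B S(x)$. Finally, both apply Lemma~\ref{lem:gauge} after noting that $S(1+t)$ and $S((1+t)^{-1})^T$ lie in $GL_r(\mathbb Q\llbracket t\rrbracket)$ because their determinants are signed Laurent monomials. The only difference is cosmetic: you write $\mathcal E_A$ in terms of $\mathcal E_B$, whereas the paper writes $\mathcal E_B$ in terms of $\mathcal E_A$.
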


\begin{proof}
Being triangulated, $\Phi$ induces a homomorphism
$\varphi:K_0(\operatorname{per}A)\to K_0(\operatorname{per}B)$, which
is $\mathbb Z[x,x^{-1}]$-linear because $\Phi$ commutes with $\langle1\rangle$
and bijective because $\Phi$ is an equivalence.  Lemma~\ref{lem:euler-form}
identifies these groups as free $\mathbb Z[x,x^{-1}]$-modules of ranks $r$ and
$s$, so
$r=s$; writing
$\varphi[Ae_j]=\sum_kS_{kj}[Bf_k]$ defines $S\in GL_r(\mathbb Z[x,x^{-1}])$.  The same lemma identifies the Euler pairings with Gram matrices $C_A$ and
$C_B$.  Since an equivalence commuting with $\langle1\rangle$ preserves
the Hom-spaces defining the pairing,
$\langle\varphi(u),\varphi(v)\rangle_B=\langle u,v\rangle_A$ for all
$u,v$; evaluating on basis vectors and expanding by sesquilinearity gives
\eqref{eq:derived-transport}.

At $x=1$ the matrices $S(x)$ and $S(x^{-1})^T$ are invertible over
$\mathbb Q\llbracket t\rrbracket$, since their determinants are Laurent
monomials up to sign.  Inverting \eqref{eq:derived-transport} gives
$\mathcal E_B=S(x)\mathcal E_A S(x^{-1})^T$, so Lemma~\ref{lem:gauge}
identifies the signed profiles and Euler--Smith modules.
\end{proof}

Thus the signed profile is an invariant of the shift-equipped perfect category.
A graded Morita equivalence is the special case $S=\Pi X$ of
\eqref{eq:derived-transport}; thus derived invariance replaces monomial
transport by an arbitrary Laurent-polynomial change of basis.

\begin{example}[A genuinely derived change of basis]
\label{ex:graded-reflection}
Let $A$ be the path algebra of $1\to2\to3$ with arrows of degree $1$, so
\[
  C_A=\begin{pmatrix}1&0&0\\x&1&0\\x^2&x&1\end{pmatrix}.
\]
The resolution $0\to Ae_3\langle1\rangle\to Ae_2\to S_2\to0$ shows that,
for $d\geq1$,
$T=Ae_1\oplus Ae_2\oplus S_2\langle d\rangle$ is a graded tilting object:
the resolution gives
$\operatorname{Hom}_{\operatorname{per}A}(T,T\langle n\rangle[i])=0$
for every $n$ and every $i\neq0$, and its shifted triangle recovers
$Ae_3\langle d+1\rangle$ from the shift-stable thick subcategory generated by
$T$.  The standard tilting construction \cite{Rickard1989}, in its graded
form, therefore gives a shift-compatible equivalence
$\operatorname{per}B\to\operatorname{per}A$ sending the three indecomposable
projectives of $B$ to $Ae_1$, $Ae_2$ and $S_2\langle d\rangle$; a direct
endomorphism calculation identifies $B$ with the path algebra of
$1\to2\leftarrow3$, where the reflected arrow has degree $d$.  On graded
$K_0$ this equivalence has change of basis
\[
  S=\begin{pmatrix}1&0&0\\0&1&x^d\\0&0&-x^{d+1}\end{pmatrix},
\]
the third column recording $[S_2\langle d\rangle]=x^d[Ae_2]-x^{d+1}[Ae_3]$,
and \eqref{eq:derived-transport}, with the roles of $A$ and $B$ interchanged,
reads
\[
  S(x^{-1})^T C_A S(x)
  =\begin{pmatrix}1&0&0\\x&1&x^d\\0&0&1\end{pmatrix}=C_B.
\]
This change of basis is not monomial, so the equivalence is not a graded
Morita equivalence; it exhibits the genuinely larger scope of
Theorem~\ref{thm:derived-invariance}.
\end{example}

\subsection{Sharpness: Zhang twists}

A degree-preserving automorphism $\tau$ defines the Zhang twist $A^\tau$,
and Zhang's construction gives an equivalence of graded module categories
\cite[Theorem~3.1]{Zhang1996}.  Such an equivalence need not commute with the
grading shift, and the profile can change.

\begin{example}[Shift compatibility is necessary]
\label{ex:twist-sensitivity}
Let $A=\K[X]\times\K[X]$ with $\deg X=1$, and let $\tau$ exchange the two
factors.  If $\pi=(1\,2)$, then
$e_i(A^\tau)_ne_j=e_iA_ne_{\pi^n(j)}$, hence
\[
  C_{A^\tau}(x)=\frac1{1-x^2}\begin{pmatrix}1&x\\x&1\end{pmatrix},
  \qquad
  \mathcal E_{A^\tau}(x)=\begin{pmatrix}1&-x\\-x&1\end{pmatrix}.
\]
The degree-one generators identify $A^\tau$ with the tensor algebra on two
opposite arrows, so it satisfies the standing hypotheses.  At $x=1$ the
profiles are $(1,1)$ for $A$ and $(0,1)$ for $A^\tau$.  Thus the two algebras
have the same total Hilbert series, the same GK dimension, and equivalent
graded module categories, but different Euler--Smith profiles.  The shift
compatibility in Theorems~\ref{thm:morita-invariance} and
\ref{thm:derived-invariance} is therefore essential.
\end{example}

\bibliographystyle{amsalpha}
\bibliography{references}

@article{AvramovBuchweitzSally1997,
  author  = {Avramov, L. L. and Buchweitz, R.-O. and Sally, J. D.},
  title   = {Laurent coefficients and {Ext} of finite graded modules},
  journal = {Math. Ann.},
  volume  = {307},
  number  = {3},
  year    = {1997},
  pages   = {401--415},
  doi     = {10.1007/s002080050041},
}

@book{Hazrat2016,
  author    = {Hazrat, R.},
  title     = {Graded Rings and Graded {Grothendieck} Groups},
  series    = {London Mathematical Society Lecture Note Series},
  volume    = {435},
  publisher = {Cambridge University Press},
  address   = {Cambridge},
  year      = {2016},
  doi       = {10.1017/CBO9781316717134},
}

@article{ReyesRogalski2019,
  author  = {Reyes, M. L. and Rogalski, D.},
  title   = {Growth of graded twisted {Calabi--Yau} algebras},
  journal = {J. Algebra},
  volume  = {539},
  year    = {2019},
  pages   = {201--259},
  doi     = {10.1016/j.jalgebra.2019.07.029},
}

@article{ReyesRogalski2022,
  author  = {Reyes, M. L. and Rogalski, D.},
  title   = {Graded twisted {Calabi--Yau} algebras are generalized {Artin--Schelter} regular},
  journal = {Nagoya Math. J.},
  volume  = {245},
  year    = {2022},
  pages   = {100--153},
  doi     = {10.1017/nmj.2020.32},
}

@article{WilkeningYu2011,
  author  = {Wilkening, J. and Yu, J.},
  title   = {A local construction of the {Smith} normal form of a matrix polynomial},
  journal = {J. Symbolic Comput.},
  volume  = {46},
  number  = {1},
  year    = {2011},
  pages   = {1--22},
  doi     = {10.1016/j.jsc.2010.06.025},
}

@article{Zhang1996,
  author  = {Zhang, J. J.},
  title   = {Twisted graded algebras and equivalences of graded categories},
  journal = {Proc. London Math. Soc. (3)},
  volume  = {72},
  number  = {2},
  year    = {1996},
  pages   = {281--311},
}

@misc{kaygun2026localsmithprofiles,
  author = {Kaygun, Atabey},
  title = {Local {Smith} Profiles of Twisted {Calabi}--{Yau} Algebras},
  year = {2026},
  howpublished = {Preprint, {arXiv}:2608.14057 [math.{RA}]},
  url = {https://arxiv.org/abs/2608.14057},
  arXiv = {arXiv:2608.14057}
}

@article{Rickard1989,
  author  = {Rickard, J.},
  title   = {Morita theory for derived categories},
  journal = {J. London Math. Soc. (2)},
  volume  = {39},
  number  = {3},
  year    = {1989},
  pages   = {436--456},
  doi     = {10.1112/jlms/s2-39.3.436},
}

\end{document}